\newtheorem{theorem}{Theorem}
\newtheorem{corollary}[theorem]{Corollary}
\newtheorem{definition}[theorem]{Definition}
\newtheorem{example}[theorem]{Example}
\newtheorem{lemma}[theorem]{Lemma}
\newtheorem{proposition}[theorem]{Proposition}
\newtheorem{remark}[theorem]{Remark}
\begin{document}

\date{}
\author{Thabet ABDELJAWAD\footnote{\c{C}ankaya University, Department of
Mathematics, 06530, Ankara, Turkey} }
\title{On Some Topological Concepts of TVS-Cone Metric Spaces and Fixed Point Theory Remarks}
\maketitle

\begin{abstract}
We prove that every TVS-cone metric space (i.e a cone metric space over a locally convex topological vector  space $E$) is first countable paracompact topological  space and by using Du's results in " [A note on cone metric fixed point theory and its equivalence, {Nonlinear Analysis},72(5),2259-2261 (2010)]", we conclude that every TVS-cone metric space is topologically isomorphic to a topological metric space. We also show how to construct comparable metric topologies to TVS-cone metric topologies by using the system of seminorms generating the topology of the locally convex topological vector space $E$. When $E$ is a Banach space these metric topologies turn to be equivalent to the original TVS-cone metric topologies. Even though, we remark that there are still some fixed point theorems to deal nontrivially with them in TVS-cone metric spaces. The nonlinear scalarization is used also to prove some fixed point theorems with nonlinear contractive conditions.
\end{abstract}

%{\bf Key Words and Phrases:} cone metric space, tvs cone metric space.

%\section{Introduction and Preliminaries}
\section{Introduction and Preliminaries}
The cone metric space fixed point theory was initiated in 2007,  by Huang and Zhang \cite{HZ}. Afterwards, many authors started to generalize fixed point theorems from metric spaces to cone metric spaces (CMS) (see \cite{RH},\cite{Ishak},\cite{TAA},\cite{TAA2},\cite{K},\cite{T},\cite{AK}, \cite{TA}). Through those generalizations, normal, sequentially regular and strongly minihedral cones were crucial. However, some results have been carried to CMS without any extra assumptions on the cone. Some other authors cared about the topological structures in CMS ( \cite{TA} and \cite{Ayse}). It was proved that CMS are first countable paracompact topological spaces provided that the cone must not have empty interior.

Recently, Du \cite{D_2009} gave the definition of a more generalized cone
metric space, namely topological vector space-cone metric space
(TVS-CMS) and   showed that Banach contraction principle in
usual metric spaces and in TVS-CMS are equivalent. Actually, he concluded that certain contractive type conditions can be transformed into equivalent ones in the correspondent usual metric space by applying the nonlinear scalarization. The fixed point theorems (for example Kannan's type and Chatterjea's type) containing such kinds of conditions have been generalized before to CMS without any normality or regularity assumption on the cone under consideration.

In this manuscript, we prove that every TVS-CMS is a first countable paracompact topological space without the need of normality assumption (see \cite{Ayse}). The proofs based on that every TVS-CMS is topologically isomorphic to a usual metric space. The key in defining these metric equivalent was the nonlinear scalarization function via some interior point of the cone \cite{D_2009}. However, we also describe how to construct such comparable metric topologies to the TVS-cone metric topologies by using the system of the seminorms generating the topology of the locally convex topological space $E$. Actually, when $E$ is Banach we arrive at an equivalent topology. Even though, we remark that  there are still some fixed point theorems that can not be proved trivially in TVS-CMS by making use of the equivalent metric metric topology. In the trivialization direction, we showed also how to transform some nonlinear contractive type conditions from TVS-CMS to usual metric spaces.

Throughout this paper, $(E,S)$ stands for real Hausdorff locally convex topological vector space (t.v.s.) with $S$ its generating system of seminorms.
A non-empty subset $P$ of $E$ is called  cone if $P+P \subset P$, $\lambda P \subset P$
for $\lambda \geq 0$ and  $P \cap (-P) =\{0\}$. The cone $P$ will be assumed to be closed with nonempty interior as well, that is, $P$ will be a convex pointed cone with nonempty interior. The positive cones in $C[0,1]$ and $L_\infty [0,1]$ have nonempty interiors. However, the positive cone of $L_p[0,1],~0\leq p <\infty$ have empty interiors \cite{Ali}. Moreover, it is known that a real locally convex topological vector space has a continuous norm if and only if it contains a closed pointed convex cone with nonempty interior \cite{Helge}. There are many examples of Fr\'{e}chet K\"{o}the sequence spaces which admit continuous norms and hence contain closed convex pointed cones with nonempty interiors.
For a given cone $P$, one can define  a partial ordering (denoted by $\leq:$ or $\leq_P$) with respect to $P$
by $x\leq y$ if and only if $y-x \in P$. The notation $x<y$ indicates that $x\leq y$ and $x\neq y$ while
$x\ll y$ will show $y-x\in intP$, where $intP$ denotes the interior of $P$.
Continuity of the algebric operations in a topological vector space and the properties of the cone imply the relations:
$$intP+intP\subseteq intP ~\emph{and}~\lambda intP \subseteq intP~(\lambda > 0).$$
We appeal to these inclusions  in the following.

\begin{definition} \cite{Ali}
A cone $P$ of a topological vector space $(X,\tau)$ is said to be normal  whenever $\tau$ has a local base of zero consisting of $P-$ full sets, where a subset  $A$  is said to be $P-$full if   $\{a \in E: x\leq a \leq y\}\subset A$ for all $x, y \in A$.
\end{definition}
\begin{theorem} \cite{Ali}
(a) A cone $P$ of a topological vector space $(X,\tau)$ is normal  whenever $\{x_\alpha\}$ and $\{y_\alpha\}$, $\alpha \in \Delta$ are two nets in $X$ with $0\leq x_\alpha \leq y_\alpha$ for each $\alpha \in \Delta$ and $y_\alpha \rightarrow 0$, then $x_\alpha \rightarrow 0$.

(b) The cone of an ordered locally convex space $(X,\tau)$ is normal if and only if $\tau$ is generated by a family of monotone $\tau-$ continuous seminorms. Where a seminorm $q$ on $X$ is called monotone if $q(x)\leq q(y)$ for all $x, y \in X$ with $0\leq x \leq y$.
\end{theorem}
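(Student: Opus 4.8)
The plan is to prove (a) first, then obtain the easy half of (b) from it and the other half by an explicit construction of seminorms.

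For the direction ``(squeeze) $\Rightarrow$ normal'' is not needed; what is immediate is the \emph{forward} implication in (a): if $\mathcal{B}$ is a local base at $0$ consisting of $P$-full sets and $0\le x_\alpha\le y_\alpha$ with $y_\alpha\to 0$, then given $V\in\mathcal{B}$ choose $\alpha_0$ with $y_\alpha\in V$ for $\alpha\succeq\alpha_0$; since $0\in V$, $0\le x_\alpha\le y_\alpha$ and $V$ is $P$-full, we get $x_\alpha\in V$ for $\alpha\succeq\alpha_0$, so $x_\alpha\to 0$. For the converse I would argue by contraposition. The pivotal remark is that the full hull $[U]:=\{z\in E:\ x\le z\le y\ \text{for some}\ x,y\in U\}$ of a set $U$ equals $(U+P)\cap(U-P)$, is itself $P$-full, and contains $U$; hence $P$ is normal if and only if the family $\{[U]:\ U\ \text{a neighbourhood of}\ 0\}$ is a local base at $0$. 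Thus, if $P$ is not normal there is an open neighbourhood $V_0$ of $0$ such that for every neighbourhood $U$ of $0$ one may pick $x_U,y_U\in U$ and $z_U$ with $x_U\le z_U\le y_U$ and $z_U\notin V_0$. Indexing by the neighbourhood filter of $0$ ordered by reverse inclusion and setting $u_U:=z_U-x_U$, $v_U:=y_U-x_U$, one has $0\le u_U\le v_U$ and $v_U\in U-U$, so $v_U\to 0$; but $x_U\to 0$ while $z_U=u_U+x_U\notin V_0$ for every $U$, which forbids $u_U\to 0$. This contradicts the squeeze hypothesis, proving (a).

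For (b), the implication ``$\Leftarrow$'' follows at once from (a): if $\tau$ is generated by monotone continuous seminorms $(q_i)_{i\in I}$ and $0\le x_\alpha\le y_\alpha$ with $y_\alpha\to 0$, then $q_i(x_\alpha)\le q_i(y_\alpha)\to 0$ for each $i$, so $x_\alpha\to 0$ and $P$ is normal. For ``$\Rightarrow$'', I would first refine the data: starting from a convex balanced local base $(U_j)$ of $\tau$ and using $[U]=(U+P)\cap(U-P)$, one checks that each $[U_j]$ is convex, balanced, absorbing and a neighbourhood of $0$, and that it lies inside any $P$-full neighbourhood containing some $U_j$; hence $\mathcal{B}:=\{[U_j]\}$ is a local base of $P$-full, convex, balanced neighbourhoods. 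Let $q_j$ be the Minkowski functional of $[U_j]$; it is a continuous seminorm, and $(q_j)$ generates $\tau$ because $\mathcal{B}$ is a convex balanced local base. Monotonicity of $q_j$ is where $P$-fullness enters: if $0\le x\le y$ and $t>0$ with $y/t\in[U_j]$, then $0\le x/t\le y/t$, and since $0,\,y/t\in[U_j]$ and $[U_j]$ is $P$-full, $x/t\in[U_j]$; therefore $\{t>0:\ y\in t[U_j]\}\subseteq\{t>0:\ x\in t[U_j]\}$, so $q_j(x)\le q_j(y)$.

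The routine parts are the squeeze arguments; the genuinely technical point is the ``$\Rightarrow$'' half of (b), specifically that the full hull of a convex balanced neighbourhood is again convex and balanced — balancedness reducing to the computation $\lambda[U]=(\lambda U-|\lambda|P)\cap(\lambda U+|\lambda|P)\subseteq(U-P)\cap(U+P)=[U]$ for $|\lambda|\le 1$ — which is what allows one to manufacture an honest generating family of monotone continuous seminorms out of an abstract local base of $P$-full sets. The net construction in the converse of (a) is standard once the indexing by the neighbourhood filter is fixed.
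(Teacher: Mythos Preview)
The paper does not give its own proof of this theorem: it is quoted verbatim from the reference \cite{Ali} (Aliprantis--Tourky, \emph{Cones and Duality}) and used as background. So there is no in-paper argument to compare against; your proposal stands on its own.

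That said, your proof is correct and is essentially the standard one found in \cite{Ali}. The identification $[U]=(U+P)\cap(U-P)$ is the right device; your contrapositive net argument for the converse of (a) is the usual construction, and the Minkowski-functional argument for the hard direction of (b) is exactly how it is done. Two small remarks: first, your parenthetical computation for balancedness of $[U]$ covers $\lambda\neq 0$ cleanly (for $\lambda>0$ one has $\lambda P=|\lambda|P$, so the same formula applies), and $\lambda=0$ is trivial since $0\in U\subseteq[U]$. Second, you should state explicitly that $[U]$ is absorbing because it contains the absorbing set $U$, and that it is a neighbourhood of $0$ for the same reason; the only place normality is actually used in the $\Rightarrow$ direction of (b) is to ensure that the family $\{[U_j]\}$ is still a \emph{base} (not merely a collection of neighbourhoods), which you do note but which deserves emphasis since it is the crux.
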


In particular, if   $P$ is a cone of a real Banach space $E$, then it  is called
\textit{normal} if there is a number $K \geq 1$ such that for
all $x,y \in E$:\  $
0\leq x \leq y\Rightarrow \|x\|\leq K \|y\|.$ The least positive integer $K$, satisfying this inequality,
is called the normal constant of $P$.
Also, $P$ is said to be  \textit{regular} if every increasing sequence which is bounded
from above is convergent. That is, if $\{x_n\}_{n\geq 1}$ is a sequence such
that $x_1 \leq x_2\leq \cdots\leq y$ for some $y \in E$, then there is $x \in E$
such that $\lim_{n\rightarrow\infty} \|x_n-x\|=0$. For more details about cones in locally convex topological vector spaces we may refer the reader to \cite{Ali}.

\begin{definition} \label{regular cone}\cite{De}
A cone $P$ of a locally convex topological vector space $(E,S)$ is said to be (sequentially) regular if every sequence in $E$ which is increasing and bounded above must be convergent in $(E,S)$. That is, if $\{a_n\}$ is a sequence in $E$ such that $a_1\leq a_2 \leq...\leq a_n \leq...\leq b$ for some $b \in E$, then there is $a \in E$ such that $q(a_n-a)\rightarrow 0$ as $n\rightarrow \infty$, for all $q \in S$.
 Equivalently, if every sequence in $E$ which is decreasing and bounded below is convergent.
\end{definition}
It is well-known that if $E$ is a real Banach space then every (sequentially) regular cone is normal.
\begin{definition} \label{strongly minihedral}\cite{De}
A cone $P$ of a locally convex topological vector space $(E,S)$ is said to be strongly minihedral if and only if any subset of $E$ which is bounded above must have a least upper bound. Equivalently, if any subset of $E$ which is bounded below must have a greatest lower bound.
\end{definition}
\begin{definition} (See \cite{CHY}, \cite{D_2008}, \cite{D_2009})
For $e \in intP$, the nonlinear scalarization function  $\xi_e:E\rightarrow \mathbb R$ is defined by
\[\xi_e(y)=\inf\{t \in \mathbb R: y \in te-P\}, \ \mbox{for all} \ y \in E.\]
\end{definition}
\begin{lemma} (See \cite{CHY}, \cite{D_2008}, \cite{D_2009})
For each $t\in \mathbb R$ and $y \in E$, the following are satisfied:
\begin{itemize}
\item[$(i)$] $\xi_e(y)\leq t\Leftrightarrow  y \in te-P$,
\item[$(ii)$] $\xi_e(y)> t\Leftrightarrow  y \notin te-P$,
\item[$(iii)$] $\xi_e(y)\geq t\Leftrightarrow  y \notin te-intP$,
\item[$(iv)$] $\xi_e(y)< t\Leftrightarrow  y \in te-intP$,
\item[$(v)$] $\xi_e(y)$ is positively homogeneous and continuous  on $E$,
\item[$(vi)$] if $y_1\in y_2+P$, then $\xi_e(y_2)\leq \xi_e(y_1)$,
\item[$(vii)$] $\xi_e(y_1+y_2)\leq \xi_e(y_1)+\xi_e(y_2)$, for all $y_1,y_2 \in E$.
\item[$(viii)$] if $y_1\in y_2+int(P)$, then $\xi_e(y_2)< \xi_e(y_1)$
\end{itemize}
\label{lemma_scalarization}
\end{lemma}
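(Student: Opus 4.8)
The plan is to reduce all eight items to a single structural observation: for each $y\in E$ the set $A(y):=\{t\in\mathbb R: y\in te-P\}$ is a nonempty, closed upper half-line $[\xi_e(y),+\infty)$ with $\xi_e(y)\in\mathbb R$. First I would check that $\xi_e$ is well defined and finite-valued. Nonemptiness of $A(y)$ uses $e\in\mathrm{int}P$: since $e-y/t\to e$ as $t\to+\infty$ and $\mathrm{int}P$ is open, $e-y/t\in\mathrm{int}P\subseteq P$ for all large $t$, hence $te-y\in P$, i.e. $t\in A(y)$. Boundedness below of $A(y)$ uses that $P$ is closed and pointed: if $t_n\to-\infty$ with $t_n\in A(y)$, then for $n$ large $|t_n|>0$ and $(t_ne-y)/|t_n|=-e-y/|t_n|\in P$; letting $n\to\infty$ and invoking closedness of $P$ gives $-e\in P$, so $e\in P\cap(-P)=\{0\}$, forcing $e=0$, which is impossible since $0\notin\mathrm{int}P$ in a nontrivial t.v.s.

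Next I would establish the shape of $A(y)$. It is upward closed: if $t\in A(y)$ and $t'\geq t$, then $t'e-y=(t'-t)e+(te-y)\in P+P\subseteq P$. It is closed: if $t_n\downarrow\xi_e(y)$ with $t_n\in A(y)$, then $t_ne-y\in P$ and closedness of $P$ yields $\xi_e(y)e-y\in P$, so $\xi_e(y)\in A(y)$. Hence $A(y)=[\xi_e(y),+\infty)$, which is precisely $(i)$; and $(ii)$ is its logical negation. For $(iv)$: if $\xi_e(y)<t$, choose $s$ with $\xi_e(y)\leq s<t$, so by $(i)$ $se-y\in P$, and then $te-y=(t-s)e+(se-y)\in\mathrm{int}P+P\subseteq\mathrm{int}P$; conversely, if $te-y\in\mathrm{int}P$ then continuity of $\delta\mapsto te-y-\delta e$ and openness of $\mathrm{int}P$ give $(t-\delta)e-y\in P$ for some $\delta>0$, whence $\xi_e(y)\leq t-\delta<t$. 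Item $(iii)$ is the negation of $(iv)$.

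The remaining items then follow mechanically from $(i)$, $(iv)$ and the cone inclusions. Positive homogeneity in $(v)$ comes from $\lambda P=P$ for $\lambda>0$ (a change of variable $t=\lambda s$ in the infimum), together with $\xi_e(0)=0$ for the case $\lambda=0$; continuity follows because $(i)$ and $(iv)$ show that $\{\,\xi_e\leq r\,\}=re-P$ is closed and $\{\,\xi_e<r\,\}=re-\mathrm{int}P$ is open for every $r\in\mathbb R$, so $\xi_e$ is simultaneously lower and upper semicontinuous. For $(vi)$, with $t=\xi_e(y_1)$ and $y_1-y_2\in P$ one has $te-y_2=(te-y_1)+(y_1-y_2)\in P+P\subseteq P$, so $\xi_e(y_2)\leq t$. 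For $(vii)$, with $t_i=\xi_e(y_i)$, $(t_1+t_2)e-(y_1+y_2)=(t_1e-y_1)+(t_2e-y_2)\in P$, so $\xi_e(y_1+y_2)\leq t_1+t_2$. For $(viii)$, with $t=\xi_e(y_1)$ and $y_1-y_2\in\mathrm{int}P$, $te-y_2=(te-y_1)+(y_1-y_2)\in P+\mathrm{int}P\subseteq\mathrm{int}P$, so $(iv)$ gives $\xi_e(y_2)<t$.

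I expect the only genuinely delicate point to be the well-definedness step, in particular the boundedness-below argument, which is where one must use all three hypotheses on $P$ (closed, pointed, nonempty interior) at once; once $(i)$ and $(iv)$ are secured, the other six statements are one-line consequences of $P+P\subseteq P$, $\mathrm{int}P+P\subseteq\mathrm{int}P$, and $\lambda\,\mathrm{int}P\subseteq\mathrm{int}P$ for $\lambda>0$.
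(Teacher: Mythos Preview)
Your proof is correct and self-contained. The paper does not actually prove this lemma; it merely cites the references \cite{CHY}, \cite{D_2008}, \cite{D_2009} and then moves on, so there is no in-paper argument to compare against. Your approach---first showing that $A(y)=\{t:y\in te-P\}$ is the closed half-line $[\xi_e(y),\infty)$ and then deducing all eight items from $(i)$ and $(iv)$ via the cone inclusions---is the standard one found in those references, and each step you outline goes through. One minor remark: you invoke $P+\mathrm{int}P\subseteq\mathrm{int}P$ in items $(iv)$ and $(viii)$, whereas the paper's preliminaries only record $\mathrm{int}P+\mathrm{int}P\subseteq\mathrm{int}P$; the inclusion you use is of course equally standard (if $x\in\mathrm{int}P$ and $p\in P$, pick a neighborhood $U$ of $0$ with $x+U\subseteq P$, then $x+p+U\subseteq P+P\subseteq P$), but you might state it explicitly for completeness.
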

Lemma \ref{lemma_scalarization} (vi) and Lemma \ref{lemma_scalarization} (viii) mean that the nonlinear scalarization function $\xi_e$ is monotone and strictly monotone. However, it is not strongly monotone (see \cite{CHY}).

\begin{definition} Let $X$ be a non-empty set. and $E$ as usual a Hausdorff locally convex topological space. Suppose  a vector-valued function $p:X\times X\rightarrow E$ satisfies:
\begin{enumerate}
\item[$(M1)$] $0\leq p(x,y)$ for all $x,y \in X$,
\item[$(M2)$] $p(x,y)=0$ if and only if $x=y$,
\item[$(M3)$] $p(x,y)=p(y,x)$ for all $x,y \in X$
\item[$(M4)$] $p(x,y) \leq p(x,z)+p(z,y)$, for all $x,y,z \in X$.
\end{enumerate}
Then,  $p$ is called TVS-cone metric on $X$ and the pair $(X,p)$ is called
a  TVS-cone metric space (in short, TVS-CMS).
\end{definition}

Note that in \cite{HZ}, the authors considered $E$ as a real Banach space
in the definition of TVS-CMS. Thus, a cone metric space (in short, CMS) in the sense of Huang and
Zhang \cite{HZ} is a special case of TVS-CMS.

\begin{lemma} (See \cite{D_2009})
Let $(X,p)$ be a TVS-CMS. Then, $d_p:X \times X\rightarrow [0,\infty)$ defined by $d_p=\phi_e\circ p$ is a metric.
\label{lemma_usual_metric}
\end{lemma}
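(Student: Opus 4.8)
The plan is to verify the four metric axioms for $d_p=\xi_e\circ p$ one at a time (I write $\xi_e$ for the nonlinear scalarization function the statement denotes $\phi_e$), in each case converting the cone-order relation that $p$ satisfies into a scalar inequality by means of the order properties of $\xi_e$ collected in Lemma \ref{lemma_scalarization}. First I would settle that $d_p$ is genuinely a map into $[0,\infty)$. That the infimum defining $\xi_e(p(x,y))$ is over a nonempty set is part of the content of Lemma \ref{lemma_scalarization} (it holds for every $y\in E$ because $te-y\to e\in\operatorname{int}P$ as $t\to\infty$). Next, $\xi_e(0)=0$: this follows from positive homogeneity, Lemma \ref{lemma_scalarization}(v), since $\xi_e(0)=\xi_e(2\cdot 0)=2\,\xi_e(0)$, or directly from the definition. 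Then $(M1)$ gives $p(x,y)\in P=0\cdot e+P$, so monotonicity, Lemma \ref{lemma_scalarization}(vi), yields $d_p(x,y)=\xi_e(p(x,y))\ge\xi_e(0)=0$.

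For the identity of indiscernibles: if $x=y$ then $p(x,y)=0$ by $(M2)$, hence $d_p(x,y)=\xi_e(0)=0$. Conversely, if $d_p(x,y)=\xi_e(p(x,y))=0$, then in particular $\xi_e(p(x,y))\le 0$, so Lemma \ref{lemma_scalarization}(i) gives $p(x,y)\in 0\cdot e-P=-P$; combined with $p(x,y)\in P$ from $(M1)$ and the pointedness $P\cap(-P)=\{0\}$ of the cone, this forces $p(x,y)=0$, and then $x=y$ by $(M2)$. (An alternative route is to let $t\downarrow 0$ in the relations $te-p(x,y)\in P$ and invoke the closedness of $P$; I would fall back on that only if the cleaner argument needed patching.)

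Symmetry is immediate from $(M3)$: $d_p(x,y)=\xi_e(p(x,y))=\xi_e(p(y,x))=d_p(y,x)$. For the triangle inequality, $(M4)$ says $p(x,z)+p(z,y)-p(x,y)\in P$, i.e.\ $p(x,z)+p(z,y)\in p(x,y)+P$, so monotonicity (Lemma \ref{lemma_scalarization}(vi)) gives $\xi_e(p(x,y))\le\xi_e\bigl(p(x,z)+p(z,y)\bigr)$, and subadditivity (Lemma \ref{lemma_scalarization}(vii)) gives $\xi_e\bigl(p(x,z)+p(z,y)\bigr)\le\xi_e(p(x,z))+\xi_e(p(z,y))$; chaining these is exactly $d_p(x,y)\le d_p(x,z)+d_p(z,y)$. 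Overall the argument is bookkeeping against Lemma \ref{lemma_scalarization}; the one step that requires more than mere monotonicity of $\xi_e$ — and hence the "hard" point, such as it is — is the implication $d_p(x,y)=0\Rightarrow x=y$, where one must actually use that $P$ is a pointed (and, in the alternative argument, closed) cone.
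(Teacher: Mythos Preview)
Your proof is correct and is precisely the natural verification via the order properties of $\xi_e$ listed in Lemma~\ref{lemma_scalarization}. Note, however, that the paper does not supply its own proof of this lemma: it is stated with the attribution ``(See \cite{D_2009})'' and left unproved, so there is no in-paper argument to compare against. Your write-up is in fact the standard proof one finds in Du's paper, and the only point of substance---that $d_p(x,y)=0$ forces $p(x,y)\in P\cap(-P)=\{0\}$ via Lemma~\ref{lemma_scalarization}(i)---is exactly the one you flagged.
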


\begin{remark}
Since a cone metric space $(X,d)$ in the sense of Huang and Zhang \cite{HZ}, is a special case of TVS-CMS, then
$d_p:X \times X\rightarrow [0,\infty)$ defined by $d_p=\xi_e\circ d$ is also a metric.
\label{remark_CMS_usual_ms}
\end{remark}

\begin{definition}(See \cite{D_2009})
Let $(X,p)$ be a TVS-CMS, $x\in X$ and $\{x_n\}_{n=1}^{\infty}$ a sequence in $X$.
\label{definition_convergence}
\begin{itemize}
\item[($i$)] $\{x_n\}_{n=1}^{\infty}$ TVS-cone converges to $x\in X$ whenever
for every $0<<c\in E$, there is a natural number $M$ such that $p(x_n,x)<<c$
for all $n\geq M$ and denoted by $cone-\lim_{n\rightarrow \infty}x_n=x$ (or
$x_n\stackrel{cone}{\rightarrow} x$ as $n\rightarrow \infty$),
\item[($ii$)] $\{x_n\}_{n=1}^{\infty}$ TVS-cone Cauchy sequence in $(X,p)$ whenever
for every $0<<c\in E$, there is a natural number $M$ such that $p(x_n,x_m)<<c$
for all $n,m \geq M$,
\item[($iii$)]  $(X,p)$ is TVS-cone complete if every sequence TVS-cone Cauchy sequence  in $X$ is a TVS-cone
convergent.
\end{itemize}
\end{definition}
\begin{lemma} (See \cite{D_2009})
Let $(X,p)$ be a TVS-CMS, $x\in X$ and $\{x_n\}_{n=1}^{\infty}$ a sequence in $X$.
Set $d_p=\xi_e\circ p$. Then the following statements hold:
\begin{itemize}
\item[($i$)] If $\{x_n\}_{n=1}^{\infty}$ converges to $x$ in TVS-CMS $(X,p)$, then
$d_p(x_n,x)\rightarrow 0$ as $n\rightarrow \infty,$
\item[($ii$)] If $\{x_n\}_{n=1}^{\infty}$ is Cauchy sequence in  TVS-CMS $(X,p)$, then
$\{x_n\}_{n=1}^{\infty}$ is a Cauchy sequence (in usual sense) in $(X,d_p)$,
\item[($iii$)] If $(X,p)$ is complete TVS-CMS, then $(X,d_p)$
is a complete metric space.
\end{itemize}
\label{lemma_eq_statements}
\end{lemma}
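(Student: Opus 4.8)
The plan is to route everything through Lemma~\ref{lemma_scalarization}(iv), which is exactly the bridge between the cone relation $y\ll te$ and the scalar inequality $\xi_e(y)<t$; the three parts then differ only in whether we are translating from cone data to metric data (parts (i), (ii)) or the other way (part (iii)).

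For part~(i) I would fix an arbitrary $\varepsilon>0$ and use that $\varepsilon e\in intP$, i.e.\ $0\ll \varepsilon e$, because $e\in intP$ and $\varepsilon>0$. By the definition of TVS-cone convergence there is then an $M$ with $p(x_n,x)\ll \varepsilon e$ for all $n\ge M$, and Lemma~\ref{lemma_scalarization}(iv) turns this directly into $d_p(x_n,x)=\xi_e(p(x_n,x))<\varepsilon$ for $n\ge M$; since $d_p\ge 0$ (it is a genuine metric by Lemma~\ref{lemma_usual_metric}) this is precisely $d_p(x_n,x)\to 0$. Part~(ii) is the identical computation applied to $p(x_n,x_m)\ll \varepsilon e$ with Lemma~\ref{lemma_scalarization}(iv).

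For part~(iii) the strategy is: start from a $d_p$-Cauchy sequence $\{x_n\}$, show it is TVS-cone Cauchy in $(X,p)$, invoke completeness of $(X,p)$ to obtain a cone-limit $x$, and then apply part~(i) to conclude $d_p(x_n,x)\to 0$, which gives completeness of $(X,d_p)$. To carry out the middle step I need, for a prescribed $c$ with $0\ll c$, to find $M$ with $p(x_n,x_m)\ll c$ for $n,m\ge M$. The device is the elementary observation that there is a $\delta>0$ with $\delta e\ll c$: indeed $c-\delta e\to c\in intP$ as $\delta\to 0^{+}$ and $intP$ is open, so $c-\delta e\in intP$ for all sufficiently small $\delta>0$. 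Fixing such a $\delta$ and using $d_p$-Cauchyness to get $M$ with $\xi_e(p(x_n,x_m))<\delta$ for $n,m\ge M$, Lemma~\ref{lemma_scalarization}(iv) yields $p(x_n,x_m)\ll \delta e$; adding $c-\delta e\in intP$ and using $intP+intP\subseteq intP$ gives $c-p(x_n,x_m)\in intP$, that is, $p(x_n,x_m)\ll c$.

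I expect the only genuine obstacle to be that last passage in part~(iii): unlike parts~(i) and~(ii), recovering the TVS-cone Cauchy condition from the scalar Cauchy condition is not a purely formal consequence of Lemma~\ref{lemma_scalarization}; it uses that $intP$ is open in order to dominate an arbitrary interior point $c$ by a scalar multiple $\delta e$ of the fixed point $e$. Everything else is routine unwinding of the definitions. As a sanity check, this same interior-point comparison is exactly what later makes the TVS-cone topology and the $d_p$-metric topology compatible, so it cannot be sidestepped here.
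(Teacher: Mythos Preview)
Your proof is correct. The paper itself does not prove this lemma---it is quoted from Du \cite{D_2009}---so for parts (i) and (ii) there is nothing to compare; your direct use of Lemma~\ref{lemma_scalarization}(iv) with $c=\varepsilon e$ is the natural argument. For part~(iii) your strategy matches exactly what the paper sketches in Remark~\ref{notice that}: establish the converse of (ii), invoke completeness of $(X,p)$, and then apply (i). The only technical difference is in how the auxiliary inequality $\delta e\ll c$ is obtained: you argue purely topologically via continuity of $\delta\mapsto c-\delta e$ and openness of $intP$, while the paper in Remark~\ref{notice that} passes through the seminorm structure, choosing $q\in S$ and $\delta>0$ with $q(b)<\delta\Rightarrow b\ll c$ and then $\epsilon$ with $q(\epsilon e)<\delta$. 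Your version is slightly more economical (it needs only that $E$ is a topological vector space, not that it is locally convex) and arrives at the same conclusion.
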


\begin{remark} \label{notice that}
Note that the implications in (i) and (ii) of Lemma \ref{lemma_eq_statements} are also conversely true. Regarding (i) we prove that if $x_n\rightarrow x$ in $(X,d_p)$, then $x_n\rightarrow x$ in $(X,p)$. To this end, let $c\gg0$ be given, and find $q \in S$ and $\delta >0$ such that $q(b)<\delta$ implies that $b<< c$. This is possible since we can find a symmetric neighborhood of $c$, $c+\{b:q(b)<\delta\}\subset intP$. Since $\frac{e}{n}\rightarrow 0$ in $(E,S)$, where $e\gg 0$, find $\epsilon = \frac{1}{n_0}$ such that $\epsilon q(e)=q(\epsilon e)<\delta $ and hence $\epsilon e\ll c$. Now, find $n_0$ such that $d_p (x_n,x)=\xi_e \circ p (x_n,x)< \epsilon$ for all $n\geq n_0$. Hence, by Lemma \ref{lemma_scalarization} (iv) $p (x_n,x)\ll \epsilon e\ll c$ for all $n\geq n_0$. The proof of the converse of implication (ii) is similar. Now it is possible to say that the part (iii) of Lemma \ref{lemma_eq_statements} is an immediate consequence of (i) and (ii).
\end{remark}

\begin{proposition}(See \cite{D_2009})
Let $(X,p)$ is complete TVS-CMS and $T:X\rightarrow X$ satisfy the contractive
condition
\begin{equation}
p(Tx,Ty)\leq k p(x,y)
\label{contraction}
\end{equation}
for all $x,y \in X$ and $0 \leq k <1$. Then, $T$ has a unique fixed
point in $X$. Moreover, for each $x\in X$, the iterative sequence
$\{T^nx\}_{n=1}^{\infty}$ converges to fixed point.
\label{Du_thm22}
\end{proposition}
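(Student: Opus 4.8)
The plan is to translate everything through the nonlinear scalarization $d_p=\xi_e\circ p$ and then invoke the classical Banach contraction principle in the ordinary metric space $(X,d_p)$. First I would apply $\xi_e$ to the contractive condition (\ref{contraction}). The inequality $p(Tx,Ty)\leq k\,p(x,y)$ means $k\,p(x,y)-p(Tx,Ty)\in P$, i.e. $k\,p(x,y)\in p(Tx,Ty)+P$, so Lemma \ref{lemma_scalarization}(vi) (monotonicity of $\xi_e$), applied with $y_1=k\,p(x,y)$ and $y_2=p(Tx,Ty)$, gives $\xi_e\big(p(Tx,Ty)\big)\leq \xi_e\big(k\,p(x,y)\big)$. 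By positive homogeneity, Lemma \ref{lemma_scalarization}(v), we have $\xi_e\big(k\,p(x,y)\big)=k\,\xi_e\big(p(x,y)\big)$ when $k>0$, and this identity holds trivially when $k=0$ because $\xi_e(0)=0$. Hence
$$d_p(Tx,Ty)=\xi_e\big(p(Tx,Ty)\big)\leq k\,\xi_e\big(p(x,y)\big)=k\,d_p(x,y)$$
for all $x,y\in X$, so $T$ is an ordinary Banach contraction with constant $k<1$ on $(X,d_p)$, which is a genuine metric space by Lemma \ref{lemma_usual_metric}.

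Second, since $(X,p)$ is a complete TVS-CMS, Lemma \ref{lemma_eq_statements}(iii) guarantees that $(X,d_p)$ is a complete metric space. The classical Banach fixed point theorem therefore applies and yields a unique $x^{*}\in X$ with $Tx^{*}=x^{*}$, together with the fact that for every $x\in X$ the Picard iterates $T^{n}x$ converge to $x^{*}$ in the metric $d_p$.

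Third, I would transfer these conclusions back to $(X,p)$. The fixed point assertion itself needs no translation, because $T$ is literally the same self-map of $X$; and uniqueness in the cone setting is automatic since any cone fixed point of $T$ is in particular a $d_p$-fixed point, of which there is only one. For the convergence of $\{T^{n}x\}_{n=1}^{\infty}$ to $x^{*}$ in the TVS-cone sense, I invoke the converse of Lemma \ref{lemma_eq_statements}(i) proved in Remark \ref{notice that}: convergence in $(X,d_p)$ implies TVS-cone convergence in $(X,p)$. This argument is essentially a dictionary translation, so there is no deep obstacle; the only points deserving care are the correct use of monotonicity \emph{together with} homogeneity of $\xi_e$ (rather than strict monotonicity, which is not needed here) when passing from (\ref{contraction}) to the scalar inequality, and the harmless degenerate case $k=0$, in which $(M1)$ forces $T$ to be constant and the statement is immediate.
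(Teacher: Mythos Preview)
Your argument is correct and is exactly the scalarization route the paper has in mind: the paper does not supply its own proof of this proposition but simply cites \cite{D_2009}, whose proof proceeds precisely by applying $\xi_e$ to (\ref{contraction}) via Lemma~\ref{lemma_scalarization}(v),(vi), invoking Lemma~\ref{lemma_usual_metric} and Lemma~\ref{lemma_eq_statements}(iii), and then the classical Banach principle. Your additional remark that TVS-cone convergence of the iterates follows from Remark~\ref{notice that} is a nice touch that makes explicit something Du leaves implicit.
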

\begin{lemma} \label{th1}
Let $(X,d)$ be a cone metric space over a locally convex space $(E,S)$, where $S$ is the family of seminorms defining the locally convex topology. Let $\{x_n\}$ be a sequence in $E$. Then

(i) $x_n\rightarrow x$ in $(X,d)$ if and only if $d(x_n,x)\rightarrow 0$ in $(E,S)$.

(ii) $\{x_n\}$ is a Cauchy sequence in $(X,d)$    if and only if $\lim_{m,n\rightarrow \infty}d(x_n,x_m)=0$ in $(E,S)$.
\end{lemma}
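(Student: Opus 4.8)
The plan is to prove both equivalences by translating statements about the $E$-valued quantities $d(x_n,x)$ and $d(x_n,x_m)$ into statements about the order relation $\ll$ that governs cone convergence, using only that $intP$ is open and nonempty. I would write out $(i)$ in detail; $(ii)$ then follows verbatim after replacing the fixed point $x$ by $x_m$ and the condition ``$n\ge N$'' by ``$n,m\ge N$'', so I would only comment on it at the end.

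For the implication ``$d(x_n,x)\to 0$ in $(E,S)\Rightarrow x_n\to x$ in $(X,d)$'' I would repeat the argument of Remark \ref{notice that}. Fix $c\gg 0$. Since $c\in intP$, $intP$ is open, and (after replacing $S$ by the equivalent directed family of finite maxima of its members, which does not change the topology) the sets $c+\{b\in E: q(b)<\delta\}$ with $q\in S$, $\delta>0$, form a neighbourhood base at $c$, we may choose $q\in S$ and $\delta>0$ with $c+\{b: q(b)<\delta\}\subseteq intP$. Then $q(b)<\delta$ forces $c-b\in intP$, i.e.\ $b\ll c$. As $d(x_n,x)\to 0$ in $(E,S)$ there is $N$ with $q(d(x_n,x))<\delta$, hence $d(x_n,x)\ll c$, for all $n\ge N$; since $c\gg 0$ was arbitrary, this is cone convergence.

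For the converse ``$x_n\to x$ in $(X,d)\Rightarrow d(x_n,x)\to 0$ in $(E,S)$'', fix $q\in S$ and $\epsilon>0$, and fix once and for all some $e\gg 0$. Since $\frac{e}{n}\to 0$ in $(E,S)$, choose $n_0$ with $q\left(\frac{e}{n_0}\right)<\epsilon$ and put $c:=\frac{e}{n_0}\gg 0$. Cone convergence gives $N$ with $d(x_n,x)\ll c$, hence $0\le d(x_n,x)\le c$, for all $n\ge N$. The remaining step — and the one I expect to be the real obstacle — is to pass from the order sandwich $0\le d(x_n,x)\le c$ to the numerical bound $q(d(x_n,x))\le q(c)<\epsilon$; this is exactly monotonicity of the seminorm $q$ with respect to $P$, which is \emph{not} automatic. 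By the theorem of \cite{Ali} recalled above, $S$ can be taken to consist of monotone seminorms precisely when $P$ is a normal cone. So I would close the argument by invoking normality of $P$ (equivalently, a monotone generating family $S$); for a general cone, Lemma \ref{lemma_eq_statements} and Remark \ref{notice that} still give convergence of $d_p=\xi_e\circ d$, but this controls $d(x_n,x)$ only through the order structure and not in the topology of $E$, so a normality-type hypothesis is where it must enter.

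For $(ii)$ the same two arguments are applied to $d(x_n,x_m)$: given $c\gg 0$, pick $q,\delta$ as above and use $\lim_{m,n}d(x_n,x_m)=0$ to get $d(x_n,x_m)\ll c$ for all $n,m\ge N$; conversely, given $q,\epsilon$, pick $c=\frac{e}{n_0}$ with $q(c)<\epsilon$, use the Cauchy condition to get $0\le d(x_n,x_m)\le c$ for $n,m\ge N$, and finish with the same monotonicity step. Nothing new occurs.
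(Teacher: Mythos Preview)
Your proof is correct and follows essentially the same route as the paper: the direction $d(x_n,x)\to 0\Rightarrow x_n\to x$ uses openness of $intP$ to find $q\in S$, $\delta>0$ with $q(b)<\delta\Rightarrow b\ll c$, and the converse chooses $c\gg0$ with $q(c)<\varepsilon$ (the paper takes $c=\frac{\varepsilon c_0}{2q(c_0)}$ directly, you use $c=e/n_0$) and then invokes monotonicity of $q$ to pass from $0\le d(x_n,x)\le c$ to $q(d(x_n,x))\le q(c)$. Your explicit flag that this last step requires normality of $P$ is exactly right---the paper's own proof appeals to normality at the same point, even though the hypothesis is absent from the lemma's statement.
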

\begin{proof}
(i) Suppose that $\{x_n\}$ converges to $x$. Let  $\varepsilon >0$ and $p \in S$ be given. Choose $c>>0$ such that $p(c)< \varepsilon$. This is possible by taking $c = \frac{\varepsilon c_0}{2p(c_0)}$, where $c_0$ is an interior point of $P$. Then there is $n_0$ such that $d(x_n,x)<<c$ for all $n>n_0$. Then by normality of the cone $P$ we have $p(d(x_n,x))\leq p(c)<\varepsilon$ for all $n>n_0$. This means $d(x_n,x)\rightarrow 0$ in $(E,S)$.
Conversely, suppose that $d(x_n,x)\rightarrow 0$ in $(E,S)$. For $c \in E$ with $c>>0$ find $\delta >0$ and $p \in S$ such that $p(b)< \delta$ implies $b<<c$. For this $\delta$ and this $p$ find $n_0$ such that $p(d(x_n,x))< \delta$ for all $n>n_0$ and so $d(x_n,x)<< c$ for all $n>n_0$. Therefore $x_n\rightarrow x$ in (X,d).

(ii) The proof is similar to that in (i).
\end{proof}

\begin{lemma} \label{th2}
Let $(X,d)$ be a TVS-cone metric space over a normal cone of a  locally convex space $(E,S)$, where $S$ is the family of seminorms defining the locally convex topology. Let $\{x_n\}$ and $\{y_n\}$ be two sequences in $X$ and $x_n\rightarrow x$, $y_n\rightarrow y$. Then $d(x_n,y_n)\rightarrow d(x,y)$ in $(E,S)$.
\end{lemma}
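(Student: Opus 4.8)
The plan is to imitate the classical proof that a metric is jointly continuous — the estimate $|d(x_n,y_n)-d(x,y)|\le d(x_n,x)+d(y_n,y)$ — and then to upgrade the resulting \emph{cone} inequalities to genuine convergence in $(E,S)$ by exploiting normality of $P$.

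First I would apply the triangle inequality $(M4)$ twice. From $d(x_n,y_n)\le d(x_n,x)+d(x,y)+d(y,y_n)$ one obtains $d(x_n,y_n)-d(x,y)\le d(x_n,x)+d(y_n,y)$, and from $d(x,y)\le d(x,x_n)+d(x_n,y_n)+d(y_n,y)$ one obtains $d(x,y)-d(x_n,y_n)\le d(x_n,x)+d(y_n,y)$. Writing $a_n:=d(x_n,x)+d(y_n,y)\in P$ and $b_n:=d(x_n,y_n)-d(x,y)\in E$, these two inequalities say precisely that $0\le a_n+b_n$ and $0\le a_n-b_n$, equivalently $0\le a_n+b_n\le 2a_n$ and $0\le a_n-b_n\le 2a_n$.

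Next I would invoke Lemma \ref{th1}: since $x_n\rightarrow x$ and $y_n\rightarrow y$ in $(X,d)$, we have $d(x_n,x)\rightarrow 0$ and $d(y_n,y)\rightarrow 0$ in $(E,S)$; by continuity of addition and of scalar multiplication in $E$ it follows that $a_n\rightarrow 0$, hence $2a_n\rightarrow 0$ in $(E,S)$. Now comes the only genuine point: in an arbitrary ordered topological vector space the squeeze $0\le u_n\le v_n$ with $v_n\rightarrow 0$ need not force $u_n\rightarrow 0$ — this is exactly what normality of $P$ supplies. Applying the sequence (net) characterization of normal cones recalled above (part (a) of the cited result of \cite{Ali}) to $0\le a_n+b_n\le 2a_n$ and to $0\le a_n-b_n\le 2a_n$ yields $a_n+b_n\rightarrow 0$ and $a_n-b_n\rightarrow 0$ in $(E,S)$. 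Finally $b_n=\tfrac12\big((a_n+b_n)-(a_n-b_n)\big)\rightarrow 0$, which is the assertion $d(x_n,y_n)\rightarrow d(x,y)$ in $(E,S)$.

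The main obstacle is conceptual rather than computational: it is the passage from the sandwich cone inequalities to actual convergence, which genuinely needs the normality hypothesis. One could equivalently run this last step using that, by normality, the topology of $E$ is generated by monotone seminorms, so that $q(a_n\pm b_n)\le q(2a_n)\rightarrow 0$ for every such $q$; in either form everything else is the routine triangle-inequality bookkeeping.
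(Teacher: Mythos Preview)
Your argument is correct and follows essentially the same path as the paper: obtain a two-sided cone sandwich from the triangle inequality, then use normality of $P$ to turn that sandwich into convergence in $(E,S)$. The only difference is packaging: the paper fixes $\varepsilon>0$ and a seminorm $p\in S$, chooses $c\gg 0$ with $p(c)<\varepsilon/6$, derives $0\le d(x,y)+2c-d(x_n,y_n)\le 4c$, and then applies monotonicity of $p$ (available by normality) to bound $p(d(x_n,y_n)-d(x,y))$ directly; you instead first invoke Lemma~\ref{th1} to get $a_n\to 0$ in $(E,S)$ and then apply the net-squeeze characterization of normality. Your alternative remark using monotone seminorms is precisely the paper's route.
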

\begin{proof}
Let $\varepsilon >0$ and $p \in S$ be given. Choose $c \in E$ with $c>>0$ such that $p(c)< \frac{\varepsilon}{6}$. From $x_n\rightarrow x$ and  $y_n\rightarrow y$, find $n_0$ such that for all $n>n_0$, $d(x_n,x)<<c$ and $d(y_n,y)<<c$. Then for all $n>n_0$ we have
$$d(x_n,y_n)\leq d(x_n,x)+d(x,y)+ d(y,y_n)\leq d(x,y)+2c,$$

and
$$d(x,y)\leq d(x,x_n)+d(x_n,y_n)+ d(y_n,y)\leq d(x_n,y_n)+2c.$$
Hence
$$0\leq d(x,y)+2c-d(x_n,y_n)\leq 4c$$
 and so by the normality of $P$ we obtain
 $$p(d(x_n,y_n)-d(x,y))\leq p(d(x,y)+2c-d(x_n,y_n))+p(2c)\leq 6 p(c)< \varepsilon.$$ Therefore $d(x_n,y_n)\rightarrow d(x,y)$ in $(E,S)$.
\end{proof}
\section{Topological TVS-Cone Metric Spaces}

\begin{theorem} \label{TS}
Every TVS-CMS is a first countable topological space
\end{theorem}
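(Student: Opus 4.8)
The plan is to make explicit the topology carried by a TVS-CMS $(X,p)$ and then produce a countable neighbourhood base at each point. Fix once and for all an interior point $e\in \mathrm{int}P$ (this is where the hypothesis $\mathrm{int}P\neq\emptyset$ enters), and for $x\in X$, $c\gg 0$ write $B_p(x,c)=\{y\in X: p(x,y)\ll c\}$ for the corresponding cone ball. First I would check that the family $\{B_p(x,c): x\in X,\ c\gg 0\}$ is a base for a topology $\tau_p$ on $X$: every point lies in such a ball since $p(x,x)=0\ll c$, and given $z\in B_p(x_1,c_1)\cap B_p(x_2,c_2)$ one uses the triangle inequality $(M4)$ together with the inclusions $\mathrm{int}P+\mathrm{int}P\subseteq \mathrm{int}P$ and $\mathrm{int}P+P\subseteq \mathrm{int}P$ to produce $c_3\gg 0$ with $B_p(z,c_3)\subseteq B_p(x_1,c_1)\cap B_p(x_2,c_2)$ — concretely, pick $c_3$ so that $c_3\ll c_i-p(x_i,z)$ for $i=1,2$. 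This $\tau_p$ is the topology for which sequential convergence is exactly the TVS-cone convergence of Definition \ref{definition_convergence}.

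Next I would relate $\tau_p$ to the metric $d_p=\xi_e\circ p$ of Lemma \ref{lemma_usual_metric}. The crucial observation is that, by Lemma \ref{lemma_scalarization}$(iv)$, for all $x,y\in X$ and $t>0$ one has $d_p(x,y)=\xi_e(p(x,y))<t$ if and only if $p(x,y)\in te-\mathrm{int}P$, i.e. $p(x,y)\ll te$. Hence the open $d_p$-ball $B_{d_p}(x,t)$ coincides \emph{exactly} with the cone ball $B_p(x,te)$. In particular every metric ball is a cone ball, so every $\tau_{d_p}$-open set is $\tau_p$-open.

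For the reverse inclusion I need: for every $c\gg 0$ there is $t>0$ with $te\ll c$. Since $\tfrac{1}{n}e\to 0$ in $(E,S)$ and $\mathrm{int}P$ is open, $c-\tfrac{1}{n}e\to c\in \mathrm{int}P$ forces $c-\tfrac{1}{n}e\in \mathrm{int}P$ for all large $n$; take $t=\tfrac{1}{n}$ for such an $n$. Then, using transitivity of $\ll$ (again a consequence of $\mathrm{int}P+\mathrm{int}P\subseteq \mathrm{int}P$), $B_p(x,te)=B_{d_p}(x,t)\subseteq B_p(x,c)$. Thus every $\tau_p$-basic neighbourhood of $x$ contains a $d_p$-ball, so $\tau_p\subseteq \tau_{d_p}$, and combined with the previous paragraph $\tau_p=\tau_{d_p}$.

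Finally, a metric topology is first countable: $\{B_{d_p}(x,\tfrac{1}{n}): n\in\mathbb{N}\}$ is a countable neighbourhood base at $x$, and by the identification above this is precisely $\{B_p(x,\tfrac{1}{n}e): n\in\mathbb{N}\}$. Hence $(X,\tau_p)$ is first countable, proving the theorem. The only steps requiring genuine care are the verification that the cone balls form a topology base and the small approximation argument $te\ll c$; everything else is an immediate translation through $\xi_e$ by Lemma \ref{lemma_scalarization}. I would expect the write-up to fold the "base of a topology" check into a short preliminary remark, so that the proof of the theorem reduces to the identity $B_{d_p}(x,t)=B_p(x,te)$ plus first countability of metric spaces.
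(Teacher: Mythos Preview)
Your argument is correct, but it follows a different route from the paper's own proof. The paper proves Theorem~\ref{TS} directly, without invoking the scalarization $\xi_e$ at all: it shows that for any $c_1,c_2\gg 0$ there exists $c\gg 0$ with $c\ll c_1$ and $c\ll c_2$ (by taking $c=c_1/n_0$ for $n_0$ large, using a seminorm neighbourhood contained in $c_2-\mathrm{int}P$), and from this concludes that the cone balls form a base and that $\{B(x,c_0/n):n\in\mathbb N\}$ is a countable local base. Only afterwards, in Corollary~\ref{isom}, does the paper establish that $(X,p)$ and $(X,d_p)$ are homeomorphic, and there the argument is indirect, going through sequential continuity via Remark~\ref{notice that} and Corollary~\ref{sc}. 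Your approach reverses this order: you first prove the set-theoretic identity $B_{d_p}(x,t)=B_p(x,te)$ from Lemma~\ref{lemma_scalarization}(iv), deduce $\tau_p=\tau_{d_p}$ outright, and then read off first countability from the metric side. This yields both the theorem and the corollary in one stroke, and your identification of the two topologies is sharper than the paper's sequential argument; the paper's approach, on the other hand, is more self-contained for this particular statement in that it does not rely on the properties of $\xi_e$.
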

\begin{proof}
Let $(X,\rho)$ be a TVS-CMS over a cone $P$ with nonempty interior. Let $\beta_\rho= \{ B(x,c): x \in X, c\gg 0\}$. If we show that for all $c_1\gg 0$, $c_2 \in E$ there exists a $c\gg 0$ such that $c\ll c_1$ and $c\ll c_2$ then we would be able to infer that $\tau_\rho=\{U\subseteq X: \forall x \in U~~\exists c\gg 0~\texttt{with}~B(x,c)\subseteq U\} \cup \{\Phi\}$ is a topology on $X$ with basis $\beta_\rho$. To this end let $c_1\gg 0$ and $c_2\gg 0$ be given and hence find $\delta > 0$ and $q \in S$ such that $q(b)<\delta$ implies $b\ll c_2$. Also, choose $n_0$ such that $\frac{1}{n_0}< \frac{\delta}{q(c_1)}$. Let $c=\frac{c_1}{n_0}$, then $c\gg 0$, $c\ll c_1$ and $c\ll c_2$. For the same interpretation $(X,\rho)$ is first countable. Actually, it can be shown that $\beta_x=\{B(x,\frac{c_0}{n}): n \in \mathbb{N}\}$ is a countable local base at $x$ for some $c_0\gg 0$, where $B(x,\frac{c_0}{n})=\{y \in X: \rho (x,y) \ll c_0\}$.
\end{proof}

\begin{corollary}\label{sc}
A mapping from a TVS-CMS to an arbitrary topological space is continuous if and only if it is sequentially continuous
\end{corollary}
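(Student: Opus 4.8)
The plan is to reduce the statement to Theorem \ref{TS}, i.e.\ to the fact that the domain is first countable, and then invoke the classical equivalence of continuity with sequential continuity in first countable spaces. One direction needs no hypothesis at all: if $f\colon (X,\tau_\rho)\to (Y,\tau_Y)$ is continuous and $x_n\to x$ in $(X,\tau_\rho)$, then for each neighbourhood $V$ of $f(x)$ the preimage $f^{-1}(V)$ is a neighbourhood of $x$, hence contains $x_n$ eventually, so $f(x_n)\in V$ eventually; this is valid in any topological space and uses nothing about cones. So the content is entirely in the converse.

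For the converse I would argue by contraposition. Assume $f$ is not continuous; then there exist $x\in X$ and an open $V\subseteq Y$ with $f(x)\in V$ but $f^{-1}(V)$ not a $\tau_\rho$-neighbourhood of $x$. Fixing a suitable $c_0\gg 0$, the proof of Theorem \ref{TS} supplies the countable local base $\beta_x=\{B(x,c_0/n):n\in\mathbb N\}$ at $x$, where $B(x,c_0/n)=\{y\in X:\rho(x,y)\ll c_0/n\}$. Since no $B(x,c_0/n)$ is contained in $f^{-1}(V)$, for each $n$ I may pick $x_n\in B(x,c_0/n)$ with $f(x_n)\notin V$; thus $\rho(x,x_n)\ll c_0/n$ while $f(x_n)\notin V$ for every $n$.

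It then remains to verify that $x_n\to x$ in $\tau_\rho$, since then sequential continuity would force $f(x_n)\to f(x)$, which is impossible because $V$ is a neighbourhood of $f(x)$ containing no $f(x_n)$. Let $c\gg 0$. Exactly as in the proof of Theorem \ref{TS}, choose $q\in S$ and $\delta>0$ with $q(b)<\delta\Rightarrow b\ll c$, and then $n_1\in\mathbb N$ with $q(c_0)/n_1<\delta$. For every $n\ge n_1$ we have $q(c_0/n)=q(c_0)/n<\delta$, hence $c_0/n\ll c$; combining this with $\rho(x,x_n)\ll c_0/n$ and the inclusion $intP+intP\subseteq intP$ (i.e.\ transitivity of $\ll$) yields $\rho(x,x_n)\ll c$ for all $n\ge n_1$, so $x_n\to x$. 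I do not expect a genuine obstacle here: the whole argument is the standard first-countability proof, and the only point requiring a little care is confirming that the sets $B(x,c_0/n)$ really do form a local base shrinking to $x$ — but that is precisely the estimate already carried out in the proof of Theorem \ref{TS}, so it is available for free.
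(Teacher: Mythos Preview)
Your proof is correct and follows exactly the route indicated in the paper: the paper simply says ``The proof follows by Theorem \ref{TS}'', i.e.\ it appeals to first countability of the domain and the classical fact that on first countable spaces continuity and sequential continuity coincide. You have merely unpacked this standard argument in full detail, including the verification that $\{B(x,c_0/n)\}$ is a shrinking local base, which the paper leaves implicit.
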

The proof follows by Theorem \ref{TS}.
\begin{corollary}\label{isom}
Every  TVS-CMS $(X,p)$ is topologically isomorphic to its correspondent metric space $(X,d_p)$
\end{corollary}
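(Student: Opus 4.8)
The plan is to show that the identity map $\mathrm{id}_X$ is a homeomorphism between $(X,\tau_p)$, the TVS-cone metric topology constructed in Theorem \ref{TS}, and $(X,\tau_{d_p})$, the metric topology induced by $d_p=\xi_e\circ p$ (which is a genuine metric by Lemma \ref{lemma_usual_metric}). Both spaces are first countable: $(X,\tau_p)$ by Theorem \ref{TS}, and $(X,\tau_{d_p})$ because every metric topology is. Consequently, for a map whose domain is either of these two spaces, continuity is equivalent to sequential continuity --- for $\tau_p$ this is exactly Corollary \ref{sc}, and for the metric topology it is the classical fact.

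First I would verify that $\mathrm{id}_X\colon (X,\tau_p)\rightarrow(X,\tau_{d_p})$ is sequentially continuous: if $x_n\rightarrow x$ in $(X,p)$ then Lemma \ref{lemma_eq_statements}(i) yields $d_p(x_n,x)\rightarrow 0$, i.e.\ $x_n\rightarrow x$ in $(X,\tau_{d_p})$. By Corollary \ref{sc} this identity map is continuous, so $\tau_{d_p}\subseteq\tau_p$. Next I would verify the reverse direction $\mathrm{id}_X\colon (X,\tau_{d_p})\rightarrow(X,\tau_p)$: if $d_p(x_n,x)\rightarrow 0$, then the converse of Lemma \ref{lemma_eq_statements}(i) --- established in Remark \ref{notice that} via the scalarization inequalities of Lemma \ref{lemma_scalarization}(iv) --- gives $x_n\rightarrow x$ in $(X,p)$, i.e.\ in $(X,\tau_p)$. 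Since $(X,\tau_{d_p})$ is first countable, sequential continuity forces continuity, hence $\tau_p\subseteq\tau_{d_p}$.

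Combining the two inclusions gives $\tau_p=\tau_{d_p}$, so $\mathrm{id}_X$ is the desired topological isomorphism of $(X,p)$ onto $(X,d_p)$. The only point requiring care is the passage from sequential continuity to continuity, which relies entirely on the first countability of both topologies; every other step is a direct invocation of Lemma \ref{lemma_eq_statements}(i) and of its converse from Remark \ref{notice that}. It is worth noting that the basic $\rho$-balls $B(x,c)$ generating $\tau_p$ and the $d_p$-balls generating $\tau_{d_p}$ need not coincide as sets --- only the topologies they generate agree --- which is precisely why the argument is run at the level of convergent sequences rather than by matching basic neighborhoods.
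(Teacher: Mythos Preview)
Your argument is correct and is exactly the approach the paper takes: the paper's proof simply reads ``The proof follows by Remark \ref{notice that} and Corollary \ref{sc},'' and you have spelled out precisely that reasoning in detail.
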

The proof follows by  Remark \ref{notice that} and Corollary \ref{sc}.
\begin{corollary}\label{paracompact}
Every  TVS-CMS is a paracompact topological space and hence $T_4$ space.
\end{corollary}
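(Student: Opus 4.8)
The plan is to obtain paracompactness for free from the topological isomorphism already established. By Corollary~\ref{isom} the TVS-cone metric topology $\tau_p$ on $X$ is homeomorphic to the topology induced by the genuine metric $d_p=\xi_e\circ p$ of Lemma~\ref{lemma_usual_metric}. By A.~H.~Stone's classical theorem every metrizable space is paracompact, so $(X,d_p)$ is paracompact. Since paracompactness is formulated purely in terms of open covers and locally finite open refinements, it is a topological invariant, hence preserved under the homeomorphism of Corollary~\ref{isom}. Transporting the property back gives that $(X,\tau_p)$ is paracompact, which is the first assertion.

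For the second assertion I would first record that $(X,\tau_p)$ is Hausdorff: $(X,d_p)$ is a metric space, hence $T_2$, and the Hausdorff axiom is again a topological invariant carried along the same homeomorphism (equivalently, one reads off $T_2$ directly from $(M2)$ and the interior-point separation already used in the proof of Theorem~\ref{TS}). Then I invoke the standard fact that every paracompact Hausdorff space is normal: given disjoint closed sets $A,B$, refine the open cover $\{X\setminus A,\,X\setminus B\}$ to a locally finite open refinement, pass to a shrinking, and form the unions of the closures of the members meeting $A$ (resp.\ $B$); local finiteness guarantees these unions are closed and one separates $A$ and $B$ by their complements. Combining normality with $T_1$ (indeed $T_2$) yields that $(X,\tau_p)$ is $T_4$.

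I do not anticipate a real obstacle: the substantive work was front-loaded into Corollary~\ref{isom}, and the remainder is the invocation of Stone's theorem together with the implication ``paracompact $+$ Hausdorff $\Rightarrow$ normal.'' The only step deserving an explicit line is the observation that both paracompactness and the Hausdorff property are topological invariants, so that Corollary~\ref{isom} may legitimately be applied; granting that, the proof reduces to two citations. One could alternatively run the identical argument through the seminorm-generated comparable metric topology constructed later in the paper instead of through Du's scalarization, but that route is strictly longer and adds nothing at the level of the underlying topology.
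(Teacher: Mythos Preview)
Your argument is correct and is essentially the paper's own proof: the paper simply cites Theorem~\ref{TS} and Remark~\ref{notice that} (which together yield Corollary~\ref{isom}) and the fact that paracompactness is a topological property, leaving Stone's theorem and the implication ``paracompact $+$ Hausdorff $\Rightarrow T_4$'' implicit. You have only made explicit what the paper packs into a single line.
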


The proof follows by Theorem \ref{TS}, Remark \ref{notice that} and that paracompactness is a topological property.

Corollary \ref{paracompact} above generalizes the result obtained recently by A. S\"{o}nmez in \cite{Ayse}. No normality assumption is assumed in our case.
\begin{corollary}\label{completions}
Every  TVS-CMS is  topologically isomorphic to a dense subspace of a complete metric space.
\end{corollary}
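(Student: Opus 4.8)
The plan is to bootstrap from Corollary \ref{isom} together with the classical fact that every metric space admits a completion. By Corollary \ref{isom}, the identity map furnishes a topological isomorphism between the TVS-CMS $(X,p)$ (with its cone topology $\tau_p$) and the usual metric space $(X,d_p)$, where $d_p=\xi_e\circ p$ as in Lemma \ref{lemma_usual_metric}. So it suffices to exhibit $(X,d_p)$ as a dense subspace of a complete metric space, and then compose.

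First I would invoke the standard completion construction: given the metric space $(X,d_p)$, there exists a complete metric space $(\widehat{X},\widehat{d})$ together with an isometric embedding $j:(X,d_p)\to(\widehat{X},\widehat{d})$ whose image $j(X)$ is dense in $\widehat{X}$. (Concretely, $\widehat{X}$ is the set of equivalence classes of $d_p$-Cauchy sequences under the relation $\{a_n\}\sim\{b_n\}\iff d_p(a_n,b_n)\to 0$, with $\widehat{d}([\{a_n\}],[\{b_n\}])=\lim_n d_p(a_n,b_n)$, and $j(x)=[\{x,x,x,\dots\}]$; completeness and density are the routine parts of this construction.) Since $j$ is an isometry, it is in particular a homeomorphism of $(X,d_p)$ onto the subspace $j(X)\subseteq\widehat{X}$ equipped with the subspace topology induced by $\widehat{d}$.

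Then I would simply chain the two maps: the topological isomorphism $(X,\tau_p)\to(X,d_p)$ from Corollary \ref{isom} composed with the homeomorphism $(X,d_p)\to j(X)$ gives a topological isomorphism of $(X,p)$ onto $j(X)$, a dense subspace of the complete metric space $(\widehat{X},\widehat{d})$. The only point that needs a word of justification is that density survives this transport, and this is immediate: density of $j(X)$ in $\widehat{X}$ is built into the completion, and it is preserved under the homeomorphism since the composite is a homeomorphism onto $j(X)$ precisely.

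I do not expect a genuine obstacle here; the real work was already absorbed into Corollary \ref{isom}, which reduces the TVS-cone topology to an honest metric topology via the nonlinear scalarization, with no normality, regularity, or strong minihedrality hypothesis on the cone. The one thing worth flagging (and worth stating explicitly, to avoid over-claiming) is that this completion $(\widehat{X},\widehat{d})$ is produced purely as a metric space; we are \emph{not} asserting that it carries a natural $E$-valued TVS-cone metric extending $p$, only that $(X,p)$ sits topologically as a dense subspace of it. If one wanted the stronger statement that a complete TVS-CMS already equals its own metric completion, that would follow from Lemma \ref{lemma_eq_statements}(iii) and Remark \ref{notice that}, but that is not what is being claimed in Corollary \ref{completions}.
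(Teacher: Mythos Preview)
Your argument is correct and follows exactly the route indicated in the paper: invoke Corollary \ref{isom} to identify $(X,p)$ topologically with $(X,d_p)$, then apply the classical completion theorem for metric spaces. Your closing caveat that the completion is only obtained as a metric space (not as a TVS-CMS over the same $E$) matches the paper's own remark immediately following the corollary.
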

The proof follows by Corollary \ref{isom} and the completion theorem of usual metric spaces.

Note that in Corollary \ref{completions} above, we didn't say that the imbedding is isometrically isomorphic. This is because isometries are only defined between TVS cone metric spaces over the same $E$. However, for example, if $(X,p)$ is a CMS over a normal cone $P$ of a Banach $E$, then $(X,p)$ is isometrically isomorphic to a dense subspace of a complete CMS over the same cone $P$ ( see \cite{T}).
\begin{corollary}\label{compact}
A TVS-CMS is compact if and only if it is sequentially compact.
\end{corollary}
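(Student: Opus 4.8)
The plan is to reduce the statement entirely to the corresponding fact about metric spaces, exploiting that a TVS-CMS carries the topology of an honest metric space. Concretely, by Corollary \ref{isom} the TVS-CMS $(X,p)$ is topologically isomorphic to the metric space $(X,d_p)$, where $d_p=\xi_e\circ p$; in particular the two spaces have literally the same open sets and the same convergent sequences (the latter also follows directly from Lemma \ref{lemma_eq_statements}(i) together with Remark \ref{notice that}, which gives the converse). Since both compactness and sequential compactness are properties defined purely in terms of the topology (open covers in the first case, convergent subsequences in the second), a topological isomorphism preserves each of them. Hence it suffices to prove the equivalence for the metric space $(X,d_p)$.

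The core step is therefore the classical theorem that a metric space is compact if and only if it is sequentially compact. I would either cite this as standard, or sketch it: the forward direction uses that a compact space is countably compact, and in a first countable (indeed metric) space countable compactness implies sequential compactness via a diagonal/accumulation-point argument; the reverse direction proceeds by first showing a sequentially compact metric space is totally bounded and complete, then deriving the Lebesgue-number property and extracting a finite subcover from any open cover. None of this needs anything beyond the metric $d_p$, so no cone-theoretic subtleties intervene here.

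Assembling the pieces: $(X,p)$ compact $\iff$ $(X,d_p)$ compact (topological isomorphism) $\iff$ $(X,d_p)$ sequentially compact (metric spaces) $\iff$ $(X,p)$ sequentially compact (again the topological isomorphism, plus the fact that ``$x_n\to x$ in $(X,p)$'' and ``$x_n\to x$ in $(X,d_p)$'' are equivalent notions by Lemma \ref{lemma_eq_statements}(i) and Remark \ref{notice that}, so a subsequence converging in one sense converges in the other).

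The only point requiring a little care — and the step I would flag as the main obstacle, modest as it is — is making sure ``sequential compactness'' in the TVS-CMS sense (every sequence has a subsequence that TVS-cone converges, i.e. converges in the sense of Definition \ref{definition_convergence}(i)) genuinely coincides with sequential compactness of $(X,d_p)$. This is exactly guaranteed by the two-sided equivalence of the convergence notions recorded in Lemma \ref{lemma_eq_statements}(i) and Remark \ref{notice that}; once that is invoked, the chain of equivalences above closes and the corollary follows.
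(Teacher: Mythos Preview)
Your proposal is correct and follows essentially the same route as the paper: invoke Corollary~\ref{isom} to identify $(X,p)$ topologically with the metric space $(X,d_p)$, note that compactness and sequential compactness are topological properties, and appeal to the classical equivalence for metric spaces. Your extra care in matching the TVS-cone notion of sequential compactness with the metric one via Lemma~\ref{lemma_eq_statements}(i) and Remark~\ref{notice that} is a welcome clarification, but it does not depart from the paper's argument.
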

The proof follows by Corollary \ref{isom}, compactness and sequential compactness are topological properties and that a subset of a metric space is compact if and only if it is sequentially compact.

\begin{corollary}
Every TVS-CMS is of second category.
\end{corollary}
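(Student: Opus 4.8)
The plan is to reduce the statement to the classical Baire category theorem via the metrization already established in Corollary \ref{isom}. The one genuinely delicate point is that being \emph{of the second category in itself} is \emph{not} a consequence of mere metrizability: the rationals form a metric space, hence trivially a TVS-CMS over $\mathbb R$ with $P=[0,\infty)$, yet $\mathbb Q$ is meager in itself. So the argument must actually use completeness, and I would read the corollary as a statement about complete TVS-CMS (which is what the preceding machinery, in particular Lemma \ref{lemma_eq_statements}(iii), is tailored to).

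Concretely, let $(X,p)$ be a complete TVS-CMS, fix $e\gg 0$, and set $d_p=\xi_e\circ p$. First I would recall from Lemma \ref{lemma_eq_statements}(iii) that $(X,d_p)$ is then a complete metric space. Next, by Corollary \ref{isom} (which itself rests on Remark \ref{notice that} and Corollary \ref{sc}) the identity map is a topological isomorphism between $(X,\tau_p)$ and the metric space $(X,d_p)$. Then I would invoke the Baire category theorem: a complete metric space is a Baire space, so in particular it is not a countable union of subsets that are nowhere dense in it, i.e.\ it is of the second category in itself. Finally, being of the second category in itself is a topological property: if $h\colon Y\to Z$ is a homeomorphism and $Z=\bigcup_{n} N_n$ with each $N_n$ nowhere dense in $Z$, then $Y=\bigcup_{n} h^{-1}(N_n)$ exhibits $Y$ as first category, contradicting the hypothesis on $Z$. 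Applying this to the homeomorphism of Corollary \ref{isom} transports the conclusion from $(X,d_p)$ back to $(X,p)$, which finishes the proof.

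The main obstacle is not in the Baire argument, which is routine once metrization is available, but in making sure completeness genuinely transfers through the nonlinear scalarization $\xi_e$ — nothing topological alone yields second category. That transfer is exactly Lemma \ref{lemma_eq_statements}(iii), whose proof in turn leans on the equivalence ``$\tau_p$-Cauchy $\iff$ $d_p$-Cauchy'' recorded in Remark \ref{notice that}. I would therefore present the corollary as one more entry in the list of properties (first countability, paracompactness, $T_4$, the compact/sequentially-compact equivalence) that pass between a complete TVS-CMS and its scalarized metric companion, flagging that, unlike those, this one requires completeness rather than just the topology.
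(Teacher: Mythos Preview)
Your argument follows the same template the paper uses throughout this section: transport the question to the metric companion $(X,d_p)$ via the topological isomorphism of Corollary \ref{isom}, invoke the classical result there, and pull the conclusion back by topological invariance. The paper's own justification is a single sentence citing Theorem \ref{TS} and the fact that being of second category is a topological property; it does not mention completeness, Lemma \ref{lemma_eq_statements}(iii), or the Baire theorem.

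You are right to flag this. As the corollary is stated in the paper --- ``Every TVS-CMS is of second category'' --- it is simply false, and your example of $\mathbb Q$ with the usual metric (a TVS-CMS over $E=\mathbb R$, $P=[0,\infty)$) makes this transparent. So your proposal is not merely a more detailed rendering of the paper's proof but a repair of it: you supply the missing completeness hypothesis and identify exactly where it enters, namely through Lemma \ref{lemma_eq_statements}(iii), so that the Baire category theorem applies to $(X,d_p)$. With that correction your argument is correct and is the natural one; the paper's one-line proof, read literally, has a genuine gap.
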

The proof followed by Theorem \ref{TS} and that being of second category is a topological property.
\begin{definition}\label{sclosed}
A subset $A$ of a TVS-CMS $(X,p)$ is called sequentially closed if whenever $x_n \in A$ with $x_n\rightarrow x$ then $x \in A$.
\end{definition}
\begin{corollary} \label{scl}
A subset $A$ of a TVS-CMS $(X,p)$ is  sequentially closed if and only if $A$ is closed.
\end {corollary}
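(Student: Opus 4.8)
The plan is to get this from the topological isomorphism of Corollary \ref{isom} together with the elementary fact that in a metric space closedness and sequential closedness coincide. First I would dispose of the trivial implication, which holds in any topological space: if $A$ is closed and $\{x_n\}\subset A$ with $x_n\rightarrow x$ in $(X,p)$, then $x$ belongs to the closure of $A$ in $\tau_\rho$, hence $x\in A$; so $A$ is sequentially closed. (Here one uses that TVS-cone convergence in the sense of Definition \ref{definition_convergence} coincides with $\tau_\rho$-convergence, which is immediate from the description of the basis $\beta_\rho$ in the proof of Theorem \ref{TS}.)

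For the converse, suppose $A$ is sequentially closed. By Corollary \ref{isom} the identity map $(X,p)\rightarrow(X,d_p)$ is a homeomorphism, and by Remark \ref{notice that} a sequence converges in $(X,p)$ if and only if it converges, to the same limit, in $(X,d_p)$. Consequently $A$ is sequentially closed as a subset of the metric space $(X,d_p)$. In a metric space a sequentially closed set is closed: given $x$ in the $d_p$-closure of $A$, pick $x_n\in A$ with $d_p(x_n,x)<\tfrac1n$, so that $x_n\rightarrow x$ and therefore $x\in A$. Thus $A$ is $d_p$-closed, and transporting this back through the homeomorphism shows $A$ is closed in $(X,p)$.

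Alternatively, one may argue directly from first countability (Theorem \ref{TS}) without invoking $(X,d_p)$: fix $x$ in the closure of $A$ and use the countable local base $\beta_x=\{B(x,\frac{c_0}{n}):n\in\mathbb{N}\}$ at $x$. These sets are nested, since $\frac{c_0}{n}-\frac{c_0}{m}\in intP$ whenever $m>n$ (because $c_0\gg0$ and $\frac1n-\frac1m>0$), so choosing $x_n\in A\cap B(x,\frac{c_0}{n})$ produces a sequence in $A$ with $x_n\rightarrow x$; sequential closedness then forces $x\in A$, i.e. the closure of $A$ is contained in $A$.

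There is no real obstacle here; the only point worth stating carefully — and it is already handled by Remark \ref{notice that} and the construction in Theorem \ref{TS} — is the identification of the notion of ``sequentially closed'' from Definition \ref{sclosed}, phrased via TVS-cone convergence, with sequential closedness relative to the topology $\tau_\rho$ (equivalently, relative to $d_p$). Once that identification is in place, the statement is just the standard fact about first countable spaces.
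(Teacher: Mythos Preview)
Your argument is correct. The paper's own proof is the one-line remark ``The proof follows by Theorem \ref{TS}'', i.e.\ it simply invokes first countability; your alternative argument spells out exactly this, while your primary route through Corollary \ref{isom} and the metric $d_p$ is an equally valid but slightly longer detour (Corollary \ref{isom} itself rests on Theorem \ref{TS}).
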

The proof follows by Theorem \ref{TS}.
\begin{proposition} \label{scts}
Let $(X,p)$ be  a TVS-CMS. Then the ball $\overline{B}(x,c_0)=\{y \in X: p(x,y)\leq c_0\}$, $c_0\gg 0$, is sequentially closed and hence closed.
\end{proposition}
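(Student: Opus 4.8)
The plan is to check the definition of \emph{sequentially closed} directly and then invoke Corollary \ref{scl} to upgrade to closedness. So I would take a sequence $\{y_n\}$ in $\overline{B}(x,c_0)$ with $y_n\rightarrow y$ in the TVS-cone topology of $(X,p)$, and aim to prove that $p(x,y)\leq c_0$, i.e. $y\in\overline{B}(x,c_0)$.

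First I would apply the triangle inequality $(M4)$ together with the hypothesis $p(x,y_n)\leq c_0$: for every $n$,
$$p(x,y)\leq p(x,y_n)+p(y_n,y)\leq c_0+p(y_n,y),$$
and since $P$ is a cone (closed under addition and nonnegative scaling) this rearranges to $p(x,y)-c_0\leq p(y_n,y)$ for all $n$. Next, I would fix once and for all some $e\gg 0$; then for each $\varepsilon>0$ we have $\varepsilon e\gg 0$, so by the definition of TVS-cone convergence there is an $M$ with $p(y_n,y)\ll\varepsilon e$ for all $n\geq M$. In particular $p(x,y)-c_0\leq p(y_M,y)\ll\varepsilon e$, and because $intP\subseteq P$ this yields $p(x,y)-c_0\leq\varepsilon e$, i.e. $\varepsilon e-\bigl(p(x,y)-c_0\bigr)\in P$ for every $\varepsilon>0$.

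The last step — and the only one that is not bookkeeping — is to let $\varepsilon\rightarrow 0^{+}$. Since $\varepsilon e\rightarrow 0$ in $(E,S)$, the vectors $\varepsilon e-\bigl(p(x,y)-c_0\bigr)$ converge in $(E,S)$ to $c_0-p(x,y)$; as $P$ is closed we get $c_0-p(x,y)\in P$, that is $p(x,y)\leq c_0$, so $y\in\overline{B}(x,c_0)$. This shows $\overline{B}(x,c_0)$ is sequentially closed, and Corollary \ref{scl} then gives that it is closed.

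The point requiring care is exactly the passage from ``$p(x,y)-c_0\ll\varepsilon e$ for every $\varepsilon>0$'' to ``$p(x,y)\leq c_0$'': this is where the standing assumption that the cone $P$ is closed is essential, since it is closedness that lets us pass to the limit $\varepsilon\to0^+$ inside $P$. Everything else reduces to the triangle inequality, the inclusion $intP\subseteq P$, and the convergence $\varepsilon e\to0$ in $(E,S)$.
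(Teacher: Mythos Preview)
Your proof is correct and follows essentially the same approach as the paper: triangle inequality to get $p(x,y)\leq c_0+p(y_n,y)$, then use convergence to bound the remainder by an interior point tending to $0$, and finally pass to the limit using that $P$ is closed. The only cosmetic difference is that the paper uses the sequence $c_0/m$ (with the given $c_0\gg 0$) in place of your $\varepsilon e$, but the argument is identical.
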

\begin{proof}
Let $y_n \in \overline{B}(x,c_0) $ be a sequence such that $y_n\rightarrow y$. Then, for each $m \in \mathbb{N}$ there exists $k_m$ such that $p(x,y)\leq p(x,y_{k_m})+p(y_{k_m},y)\ll c_0+ \frac{c_0}{m}$. Hence, $\frac{c_0}{m}+c_0-p(x,y) \in P$. Since $P$ is closed then we conclude that $c_0-p(x,y) \in P$ and so $p(y,x)\leq c_0$.
\end{proof}

The nonlinear scalarization is a way to generate metric topologies comparable to the TVS-cone metric topologies. The following theorem shows how to generate comparable metric topologies by using the generating system of seminorms.
\begin{theorem} \label{Stoplogy}
Let $(X,p)$ be a TVS-CMS over a cone $P$ in a metrizable locally convex topological vector space $(E,S)$, $S=\{q_k:k=1,2,...\}$ the set of seminorms generating the topology of $E$. Then

 (i) the function $d_S:X\times X\rightarrow\mathbb{R}$ defined by
$$d_S(x,y)=\inf\{h(u):p(x,y)\leq u,~u \in P\},$$ where $h(u)=\sum_{k=1}^\infty \frac{1}{2^k} \frac{q_k(u)}{1+q_k(u)},$ is a metric on $X$.

(ii) the metric topology $\tau_S$ induced by $d_S$ is finer than the cone metric topology $\tau_p$ induced  by $p$. In this case $d_S(x,y)=\inf\{h(u):p(x,y)\leq u,~u \in P\},$

(iii) If $E$ is a real Banach space (i.e $(X,p)$ is a CMS), then $(X,p)$ and $(X,d_S)$ are topologically isomorphic.
\end{theorem}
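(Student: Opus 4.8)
\textbf{Proof plan for Theorem~\ref{Stoplogy}.}

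For (i) the plan is to check the metric axioms directly, the content being concentrated in three places. First, $d_S$ is finite and in fact $d_S(x,y)\le h(p(x,y))$: by $(M1)$ we may take $u=p(x,y)\in P$ in the defining infimum, and $h\le\sum_{k\ge1}2^{-k}=1$. Symmetry is immediate from $(M3)$, since then the two infima range over the same subset of $P$. For the triangle inequality I would first record two elementary facts about $\varphi(t):=t/(1+t)$ on $[0,\infty)$: it is increasing, and it is subadditive, $\varphi(s+t)\le\varphi(s)+\varphi(t)$ (a one-line cross-multiplication); with $q_k(u+v)\le q_k(u)+q_k(v)$ and summation against the weights $2^{-k}$ these give $h(u+v)\le h(u)+h(v)$ for all $u,v\in P$. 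Then, for $\varepsilon>0$, choose $u,v\in P$ with $p(x,z)\le u$, $p(z,y)\le v$, $h(u)<d_S(x,z)+\varepsilon$, $h(v)<d_S(z,y)+\varepsilon$; by $(M4)$, $p(x,y)\le u+v\in P$, so $d_S(x,y)\le h(u+v)\le h(u)+h(v)<d_S(x,z)+d_S(z,y)+2\varepsilon$, and $\varepsilon\downarrow0$ finishes it. The remaining axiom, $d_S(x,y)=0\Rightarrow x=y$, is where closedness of $P$ enters: from $d_S(x,y)=0$ pick $u_n\in P$ with $p(x,y)\le u_n$ and $h(u_n)\to0$; since $2^{-k}\varphi(q_k(u_n))\le h(u_n)$, every $q_k(u_n)\to0$, so $u_n\to0$ in $(E,S)$; the vectors $u_n-p(x,y)\in P$ converge to $-p(x,y)$, hence $-p(x,y)\in P$, and with $p(x,y)\in P$ this forces $p(x,y)\in P\cap(-P)=\{0\}$, i.e. $x=y$ by $(M2)$.

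For (ii) I would show that the identity map $(X,\tau_S)\to(X,\tau_p)$ is continuous; since its domain is metrizable, hence first countable, it suffices to verify sequential continuity, i.e. $d_S(x_n,x)\to0\Rightarrow x_n\to x$ in $(X,p)$. Given such $x_n$, pick $u_n\in P$ with $p(x_n,x)\le u_n$ and $h(u_n)<d_S(x_n,x)+n^{-1}$, so $h(u_n)\to0$ and hence $u_n\to0$ in $(E,S)$ as above. Fix $c\gg0$: since $intP$ is open and $c-u_n\to c\in intP$, we have $c-u_n\in intP$, i.e. $u_n\ll c$, for all large $n$, and then $p(x_n,x)\le u_n\ll c$ together with $intP+P\subseteq intP$ gives $p(x_n,x)\ll c$ for all large $n$. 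Thus $x_n\to x$ in $(X,p)$, the identity is continuous, and $\tau_p\subseteq\tau_S$. (Equivalently, given $c\gg0$ one picks a basic $0$-neighbourhood $V=\{b:q_k(b)<\delta,\ k\le N\}$ with $c-V\subseteq intP$ and then $r>0$ small enough that $h(u)<r$ forces $q_k(u)<\delta$ for $k\le N$, whence $B_{d_S}(x,r)\subseteq B_p(x,c)$.)

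For (iii), part (ii) already supplies $\tau_p\subseteq\tau_S$, so only the reverse inclusion remains; since $(X,\tau_p)$ is a TVS-CMS, by Theorem~\ref{TS} and Corollary~\ref{sc} it suffices to show the identity $(X,\tau_p)\to(X,\tau_S)$ is sequentially continuous, i.e. $x_n\to x$ in $(X,p)$ implies $d_S(x_n,x)\to0$. This is the one place the Banach hypothesis is used: by Lemma~\ref{th1}(i), $x_n\to x$ in $(X,p)$ yields $p(x_n,x)\to0$ in $(E,S)$; since $h$ is continuous at $0$ with $h(0)=0$ (indeed $h(u)\le\sum_{k\le N}2^{-k}q_k(u)+2^{-N}$ for every $N$), we get $h(p(x_n,x))\to0$ and therefore $d_S(x_n,x)\le h(p(x_n,x))\to0$. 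Combined with (ii) this gives $\tau_S=\tau_p$, so $(X,p)$ and $(X,d_S)$ are topologically isomorphic.

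The metric axioms and the bookkeeping ``$h(u_n)\to0\Leftrightarrow u_n\to0$ in $(E,S)$'' are routine; the real content is the comparison of the two convergences, and the asymmetry between (ii) and (iii) is the crux. In (ii) only the openness of $intP$ is used, so $\tau_p\subseteq\tau_S$ holds over any metrizable locally convex $E$. The reverse inclusion in (iii) passes through Lemma~\ref{th1}(i) — the step ``cone convergence of the distances $\Rightarrow$ convergence in $(E,S)$'', whose proof uses normality of the cone — and I expect that step, rather than the formal verifications, to be the genuinely delicate point; it is exactly the ingredient that the nonlinear-scalarization construction of Corollary~\ref{isom} manages to supply for arbitrary $E$ without a normality assumption.
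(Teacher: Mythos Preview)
Your arguments for parts (i) and (ii) are correct and are essentially identical to the paper's: the same $\varepsilon$-selection of $u_1,u_2$ for the triangle inequality, the same ``$h(u_n)\to 0\Rightarrow u_n\to 0$ in $(E,S)$'' for both the separation axiom and for (ii), and the same passage $u_n\to 0\Rightarrow u_n\ll c$ eventually.

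In part (iii), however, there is a genuine gap. You invoke Lemma~\ref{th1}(i) to convert $x_n\to x$ in $(X,p)$ into $p(x_n,x)\to 0$ in $(E,S)$, and then bound $d_S(x_n,x)\le h(p(x_n,x))$. But the forward implication in Lemma~\ref{th1}(i) uses \emph{normality} of $P$ (as you yourself note in your last paragraph), and the hypothesis of (iii) is only that $E$ is Banach --- no normality is assumed. Without normality, the chain ``$p(x_n,x)\ll c$ for every $c\gg 0$'' $\Rightarrow$ ``$\|p(x_n,x)\|\to 0$'' can fail, so your route breaks.

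The paper closes this gap by avoiding the detour through $p(x_n,x)\to 0$ altogether. It uses the very definition of $d_S$: whenever $p(x_n,x)\ll c$ one has $c\in P$ and $p(x_n,x)\le c$, so $c$ is admissible in the infimum and $d_S(x_n,x)\le h(c)$. The Banach (in fact, seminormed) hypothesis is then used only to produce, for each $\varepsilon>0$, an interior point $c\gg 0$ with $h(c)<\varepsilon$: take $c=\dfrac{\varepsilon\,c_0}{2\|c_0\|}$ for any fixed $c_0\gg 0$. This gives $d_S(x_n,x)<\varepsilon$ for all large $n$ with no appeal to normality. So the ``delicate point'' you flag is exactly where the paper's argument diverges from yours, and it is resolved by exploiting the infimum in $d_S$ rather than the particular bound $d_S\le h\circ p$.
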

\begin{proof}
(i) Clearly $0\leq d_S(x,y)< \infty$ and $d_S(x,y)=d_S(y,x)$ for all $x,y \in X$. If $ d_S(x,y)=0$ then there exists $\{u_n\} \in E$ such that $p(x,y)\leq u_n$ and $h(u_n)<\frac{1}{n}$ for all $n \in \mathbb{N}$. Hence, $q_k(u_n)\rightarrow 0$ as $n\rightarrow \infty$ for all $k =1,2,...$. Let $c\gg 0$ be given then find $\eta > 0$ and $k_0 \in \mathbb{N}$ such that $q_{k_0}(b)< \eta$ implies $b\ll c$. Since $q_{k_0}(u_n)\rightarrow 0$ as $n\rightarrow \infty$, find $n_0$ such that $q_{k_0}(u_n)< \eta$ for all $n\geq n_0$. Hence, $p(x,y)\leq u_n\ll c$. Therefore, $p(x,y) \in P \cap -P$ which implies that $p(x,y)=0$ and so $x=y$. Now if $x=y$ then $p(x,y)=0$, which implies that $d_S(x,y)\leq h(u)$ for all $u \geq 0$ and hence $0\leq d_S(x,y)\leq h(0)=0$ and so $d_S(x,y)=0$.

To prove the triangle inequality, for $x,y,z \in X$ we have,
$$\forall \varepsilon > 0 ~~ \exists u_1: ~h(u_1)< d_S(x,z)+\varepsilon,~~p(x,z)\leq u_1,$$
and
$$\forall \varepsilon > 0 ~~ \exists u_2: ~h(u_2)< d_S(z,y)+\varepsilon,~~p(z,y)\leq u_2,$$
But $p(x,y)\leq p(x,z)+p(z,y)\leq u_1+u_2$ implies that
$$d_S(x,y)\leq h(u_1+u_2)\leq h(u_1)+h(u_2)\leq d_S(x,z)+d_S(z,y)+2\varepsilon.$$ Since $\varepsilon > 0$ was arbitrary then we have $d_S(x,y) \leq d_S(x,z)+d_S(z,y)$.

(ii) We show that if $x_n\rightarrow x$ in $(X,d_S)$ then  $x_n\rightarrow x$ in $(X,p)$. First from the definition of $d_S$ we can find $\{u_n\}$ in $E$ such that $$h(u_n)< d_S(x_n,x)+\frac{1}{n},~~p(x_n,x)\leq u_n.$$ Now, since $h(u_n)\rightarrow 0$ as $n\rightarrow \infty$ then $u_n\rightarrow 0$ in $(E,S)$. If $c\gg 0$ is given, find $n_0$ such that $u_n \ll c$ for all $n \geq n_0$. This implies that $p(x_n,x)\ll c$ for all $n \geq n_0$. That is  $x_n\rightarrow x$ in $(X,p)$.

(iii) Actually, this part is true even $E$ is a seminormed space. Let $\varepsilon > 0$ be given and choose $c \in E$ such that $c\gg 0$ and $\|c\|< \varepsilon$. This is possible since $int(P)$ is nonempty. Namely, let $c=\frac{\varepsilon c_0}{2\|c_0\|}$. Now, find $n_0$ such that $p(x_n,x)\ll c$ for all $n\geq n_0$. Hence, $d_S(x_n,x)\leq \|c\|< \varepsilon$  for all $n\geq n_0$.
\end{proof}

Note that to prove  that (iii) above is true in a certain metrizable locally convex space, we need to show that for every $\varepsilon > 0$ there exists $c\gg 0$ such that $h(c)<\varepsilon$.
\begin{example} \label{cone norm space}
Let $X=\omega:$ the vector space of all real sequences, $E=\omega$ and $P=\{x=\{x_i\}:x_i\geq 0, \texttt{for all i}\}$. Then $\omega$ is a complete metrizable locally convex space (Fr\'{e}chet space) when its topology is generated by the seminorms $\{q_k: k=1,2,3,..\}$ where $q_k(x)= \sum_{i=1}^k |x_i|$, $x=\{x_i\}\in \omega$. Clearly $P$ is a normal cone in $E$ but $int(P)=\Phi$ \cite{Ali}. In this connection, it is known that a Fr\'{e}chet space admits a continuous norm if and only if it does not contain $\omega$ as a complemented subspace. On $X$ define $d(x,y)= \{|x_i-y_i|\}_i,$ $x=\{x_i\}, ~y=\{y_i\} \in X$. Then $(X,d)$  is a TVS-cone metric space over $E$. In $E$, we can define $h$ naturally by $$h(u)=\sum_{k=1}^\infty \frac{1}{2^k} \frac{|u_k|}{1+|u_k|},~~u=\{u_k\} \in E.$$ It can be shown that for every $\varepsilon > 0$ there exists $c\gg 0$ such that $h(c)<\varepsilon$. Actually, for $ \varepsilon > 0$, choose $c=(\varepsilon,0,0,...)\gg 0$, then $h(c)< \varepsilon$.
 \end{example}

\begin{definition} \label{bdd}
A subset $A$ of a TVS-CMS $(X,p)$ is called bounded above if there exists $c \in E,~c\gg 0$ such that $p(x,y)\leq c.$ for all $x,y \in A$, and is called bounded if $\delta (A)=\sup\{d(x,y): x, y \in A\}$ exists in $E$. If the supremum does not exist, we say that $A$ is unbounded.
\end{definition}
\begin{proposition} \label{bounded}
Let $(X,p)$ be TVS-CMS over a strongly minihedral normal cone $P$ in $(E,S)$ and $A \subset X$. Then, $A$ is bounded if and only if $\delta_q(A)=\sup\{q(p(x,y)):x,y \in A\}< \infty$ for all $q \in S$ (i.e $\{p(x,y):x,y \in A\}$ is bounded in $(E,S)$).
\end{proposition}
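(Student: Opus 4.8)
The plan is to prove the two directions separately, with the forward direction (boundedness implies each $\delta_q(A) < \infty$) being the easy one and the converse being where the hypotheses on the cone — strong minihedrality and normality — actually get used.

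First I would handle the direction that does not need strong minihedrality: suppose $A$ is bounded, i.e.\ $\delta(A) = \sup\{p(x,y): x,y\in A\}$ exists in $E$; call it $M$. Then for all $x,y\in A$ we have $0 \leq p(x,y) \leq M$, so by normality of $P$ (using the Banach-type or, more precisely, the locally convex characterization in Theorem~\ref{th1}(b) / the net characterization) we get $q(p(x,y)) \leq K_q\, q(M)$ for a suitable constant, or more directly: normality means $\tau$ is generated by monotone seminorms, so we may as well assume each $q\in S$ is monotone, whence $0\le p(x,y)\le M$ gives $q(p(x,y)) \le q(M)$. Either way, $\delta_q(A) \le q(M) < \infty$ for every $q\in S$, which is exactly that $\{p(x,y):x,y\in A\}$ is bounded in $(E,S)$.

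For the converse, assume $\delta_q(A) = \sup\{q(p(x,y)): x,y\in A\} < \infty$ for every $q\in S$. The set $\{p(x,y): x,y\in A\}$ is a subset of $P$ that is bounded in $(E,S)$. The idea is to produce an upper bound in $E$ and then invoke strong minihedrality to get the least upper bound $\delta(A)$. The cleanest route: since $\{p(x,y):x,y\in A\}$ is topologically bounded and contained in $P$, and $P$ is normal, one shows the set is order-bounded above — this is precisely the kind of statement where normality converts topological boundedness of a subset of the cone into order boundedness. Concretely, I would argue that if the set had no upper bound in $E$ one could extract, using the seminorm bounds $\delta_q(A)$, a contradiction with normality (a normal cone does not contain a topologically bounded set that fails to be order bounded; equivalently one can build from unboundedness an increasing sequence contradicting the monotone-seminorm generation). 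Once we know $\{p(x,y):x,y\in A\}$ has some upper bound $b\in E$, strong minihedrality (Definition~\ref{strongly minihedral}) immediately yields that it has a least upper bound, i.e.\ $\delta(A)$ exists in $E$, so $A$ is bounded. One should also double-check the compatibility with Definition~\ref{bdd}: "bounded above" there asks for $c\gg 0$ with $p(x,y)\le c$, and since $\mathrm{int}\,P\neq\emptyset$ any upper bound $b$ can be replaced by $b + e$ with $e\gg 0$ to land in $\mathrm{int}\,P$, so the two notions line up.

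The main obstacle is the step "topologically bounded subset of a normal cone is order bounded above." This is standard in the theory of ordered topological vector spaces, but writing it carefully requires either citing the appropriate fact from \cite{Ali} about normal cones in locally convex spaces or giving the short argument via monotone seminorms: if $\{q_k\}$ generates $\tau$ and each is monotone (which we may assume by normality), then $\delta_{q_k}(A) < \infty$ for all $k$ says the set sits inside $\bigcap_k \{u\in P : q_k(u) \le \delta_{q_k}(A)\}$, and one must still exhibit a single element of $E$ dominating the whole set — here strong minihedrality can actually be used a second time, or one appeals to the order-boundedness characterization directly. I would present it by first reducing to monotone seminorms, then citing \cite{Ali} for order-boundedness, then applying Definition~\ref{strongly minihedral}; the remaining verifications ($\delta(A)$ is the supremum, passage to an interior upper bound) are routine.
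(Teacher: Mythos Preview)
Your forward direction is essentially the paper's: from $0\le p(x,y)\le \delta(A)$ and the monotone seminorms available by normality, conclude $\delta_q(A)\le q(\delta(A))<\infty$.

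The converse is where your proposal has a real gap. You correctly identify that the job is to show $\{p(x,y):x,y\in A\}$ is order-bounded above, after which strong minihedrality finishes. But your proposed justification for order-boundedness is either false or circular. The assertion ``a topologically bounded subset of a normal cone is order bounded above'' is \emph{not} true in general locally convex spaces: in $c_0$ the positive cone is normal, yet the standard unit vectors $\{e_n\}$ form a norm-bounded subset of the cone with no upper bound in $c_0$. So normality is not the operative hypothesis here, and your monotone-seminorm sketch (the set sits in $\bigcap_k\{u\in P:q_k(u)\le\delta_{q_k}(A)\}$) stalls exactly where you admit it does; invoking strong minihedrality ``a second time'' to manufacture the upper bound is circular, since strong minihedrality presupposes one.

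What actually does the work is the standing assumption $\mathrm{int}\,P\neq\emptyset$, and the paper exploits it constructively rather than by citation. Fix any $c_1\gg 0$ and choose $q_0\in S$, $\eta>0$ so that $q_0(b)<\eta$ implies $b\ll c_1$. For $x,y\in A$ the rescaled vector $\tfrac{\eta}{2q_0(p(x,y))}\,p(x,y)$ has $q_0$-seminorm $\eta/2<\eta$, hence is $\ll c_1$; rearranging,
\[
p(x,y)\ \ll\ \frac{2q_0(p(x,y))}{\eta}\,c_1\ \le\ \frac{2\,\delta_{q_0}(A)}{\eta}\,c_1\ =:\ c.
\]
Thus $c$ is an explicit upper bound for the whole set, and strong minihedrality then yields $\delta(A)$. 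Note that normality plays no role in this direction --- only $\mathrm{int}\,P\neq\emptyset$ and strong minihedrality are used.
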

\begin{proof}
Assume $A$ is bounded and hence find $c\gg 0$ such that $p(x,y)\leq c$ for all $x,y \in A$. Let $q \in S$ be given, then by normality $\delta_q(A)=\sup \{q(p(x,y)):x,y \in A\}\leq q(c)<\infty $.

Conversely, assume $\delta_q(A)<\infty$ for all $q \in S$ and fix some $c_1\gg 0$. Then find $q_0 \in S$ and $\eta >0$ such that $q_0(b)< \eta$ implies $b \ll c_1$. For each $x,y \in A$, let $c_{x,y}=\frac{\eta p(x,y)}{2q_0(p(x,y))}$. Then $q_0(c_{x,y})= \frac{\eta}{2}< \eta$ implies $c_{x,y}\ll c_1 $ and so $\frac{2 q_0(p(x,y))}{\eta}c_1- p(x,y) \in int(P)$. Therefore, $p(x,y)\ll \frac{2 q_0(p(x,y))}{\eta}c_1 \leq \frac{2\delta_{q_0}}{\eta}c_1=c$. Since $P$ is strongly minihedral then  $A$ is bounded.
\end{proof}
In fact, Corollary \ref{bounded} above shows that if the cone is normal then $A$ is bounded above if and only if $\{p(x,y):x,y \in A\}$ is bounded in $(E,S)$.
\section{Fixed Point Theory Remarks}

\begin{theorem} \label{FP}
Let $(X,p)$ be a TVS-CMS and $(X,d_p)$ its correspondent metric space, $d_p=\xi_e \circ p$. Assume $T:X \rightarrow X$ is a mapping such that $p(Tx,Ty) \leq \varphi (p(x,y))$ for all $x,y \in X$, where $\varphi: P\rightarrow P$ is increasing and $\varphi(re)\leq r \varphi(e)$ for every real number  $r\geq 0$. Then, there exists linear, increasing and continuous mapping $\phi: \mathbb{R}^+\rightarrow \mathbb{R}^+$ such that $d_p(Tx,Ty)\leq \phi (d_p(x,y))$, for all $x,y \in X$.
\end{theorem}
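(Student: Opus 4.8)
The idea is to produce $\phi$ explicitly by setting $\phi(t) = \xi_e(\varphi(te))$ for $t \geq 0$, and then to verify the three required properties (well-definedness as a map $\mathbb{R}^+ \to \mathbb{R}^+$, monotonicity/linearity, continuity) together with the contractive inequality. First I would check that $\phi$ is well-defined and nonnegative: since $t \geq 0$ and $e \gg 0$, we have $te \in P$, so $\varphi(te) \in P$, whence $\varphi(te) \in 0 + P$ and Lemma \ref{lemma_scalarization}(vi) gives $\xi_e(\varphi(te)) \geq \xi_e(0) = 0$. Thus $\phi : \mathbb{R}^+ \to \mathbb{R}^+$. For monotonicity, if $0 \le s \le t$ then $te - se = (t-s)e \in P$, so $te \in se + P$; since $\varphi$ is increasing, $\varphi(te) \in \varphi(se) + P$, and applying Lemma \ref{lemma_scalarization}(vi) again yields $\phi(s) = \xi_e(\varphi(se)) \le \xi_e(\varphi(te)) = \phi(t)$.

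For the contractive inequality, fix $x,y \in X$ and write $r = d_p(x,y) = \xi_e(p(x,y))$. By Lemma \ref{lemma_scalarization}(i), $\xi_e(p(x,y)) \le r$ is equivalent to $p(x,y) \in re - P$, i.e. $p(x,y) \le re$. Since $\varphi$ is increasing, $\varphi(p(x,y)) \le \varphi(re)$, and by hypothesis $\varphi(re) \le r\,\varphi(e)$. Combining with $p(Tx,Ty) \le \varphi(p(x,y))$ gives $p(Tx,Ty) \le r\,\varphi(e)$, i.e. $p(Tx,Ty) \in r\varphi(e) - P$; but $r\varphi(e)$ need not be a scalar multiple of $e$, so I must pass through $\xi_e$ directly: monotonicity (vi) and positive homogeneity (v) give $d_p(Tx,Ty) = \xi_e(p(Tx,Ty)) \le \xi_e(r\,\varphi(e)) = r\,\xi_e(\varphi(e))$. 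Setting $\phi(t) := t\,\xi_e(\varphi(e))$ — a single linear function with slope $\lambda := \xi_e(\varphi(e)) \ge 0$ — we then have $d_p(Tx,Ty) \le \lambda\, r = \phi(d_p(x,y))$, and $\phi$ is manifestly linear, increasing (as $\lambda \ge 0$), and continuous on $\mathbb{R}^+$. (One can reconcile this with the first definition: since $\varphi(te) \le t\varphi(e)$ by hypothesis, $\xi_e(\varphi(te)) \le t\,\xi_e(\varphi(e))$, so the explicit linear majorant dominates, which is all that is needed.)

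The only subtlety — and the step I would treat most carefully — is that $d_p(x,y) = \xi_e(p(x,y))$ is a real number that need not be attained in the sense that $p(x,y) = d_p(x,y)\, e$; rather we only know $p(x,y) \le d_p(x,y)\, e$ from Lemma \ref{lemma_scalarization}(i). That inequality is exactly what lets the argument go through, since $\varphi$ being \emph{increasing} (monotone with respect to $\le_P$) converts the cone inequality $p(x,y) \le d_p(x,y)\, e$ into $\varphi(p(x,y)) \le \varphi(d_p(x,y)\, e)$, after which the scalar condition $\varphi(re) \le r\varphi(e)$ and Lemma \ref{lemma_scalarization}(v),(vi) finish the job. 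If instead $r = 0$, then $x = y$, $Tx = Ty$, and the inequality is trivial. Hence $\phi(t) = \xi_e(\varphi(e))\, t$ works, completing the proof.
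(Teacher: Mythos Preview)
Your proof is correct and follows essentially the same route as the paper: both define $\phi(t)=\xi_e(\varphi(e))\,t$ and use monotonicity of $\varphi$ together with Lemma~\ref{lemma_scalarization}(v),(vi) to push the cone inequality down to $d_p$. The only difference is that the paper introduces an auxiliary $\varepsilon>0$ and chooses $r$ with $d_p(x,y)<r<d_p(x,y)+\varepsilon$ before letting $\varepsilon\to 0$, whereas you invoke Lemma~\ref{lemma_scalarization}(i) directly at $r=d_p(x,y)$ to obtain $p(x,y)\le r e$; since the lemma indeed gives this (the infimum is attained because $P$ is closed), your shortcut is legitimate and slightly cleaner.
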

\begin{proof}
Define $\phi(t)=\xi_e(\varphi(e))t$ for every $t \geq 0$. Then clearly, $\phi$ is linear, continuous, increasing, $\phi(0)=0$ and $\phi(\xi_e(e))=\phi(1)=\xi_e(\varphi(e))$. To prove our claim, let $\varepsilon > 0$ be given and find $r \in \mathbb{R}$ such that $0 \leq d_p(x,y)<r < d_p(x,y)+\varepsilon$, where $p(x,y) \in re-P$ (or $p(x,y)\leq re$). Now, by  Lemma \ref{lemma_scalarization} and that $\varphi$ is increasing, we have
$$d_p(Tx,Ty)=\xi_e(p(Tx,Ty))\leq$$
\begin{equation} \label{incr}
\xi_e(\varphi(p(x,y)))\leq \xi_e(\varphi(re))\leq r \xi_e(\varphi(e))=r \phi(1)=\phi(r)\leq \phi(d_p(x,y))+ \phi(\varepsilon)
\end{equation}
Since $\varepsilon$ is arbitrary and $\phi(0)=0$ we conclude that $d_p(Tx,Ty)\leq \phi(d_p(x,y))$.
\end{proof}
\begin{theorem} \label{Boyd} \cite{Wong}
Let $(X,d)$ be a complete metric space and let $T:X\rightarrow X$ be a mapping satisfying
$$d(Tx,Ty)\leq \varphi (d(x,y)),~~\texttt{for all} ~x,y \in X,$$ where $\phi: \mathbb{R}^+\rightarrow {R}^+$ is uppersemicontinuous from the right, satisfying $\phi(t)<t$ for $t>0$. Then, $T$ has a unique fixed point, say $u$, and $T^nx\rightarrow u$ as $n\rightarrow \infty$ for all $x \in X$.
\end{theorem}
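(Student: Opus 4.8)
The plan is to reproduce the classical Picard-iteration argument of Boyd and Wong. Fix $x_{0}\in X$, set $x_{n+1}=Tx_{n}$ and $d_{n}=d(x_{n},x_{n+1})$. If $d_{N}=0$ for some $N$ then $x_{N}$ is already a fixed point, so assume $d_{n}>0$ for all $n$. From $d_{n+1}=d(Tx_{n},Tx_{n+1})\le\varphi(d_{n})<d_{n}$ the sequence $(d_{n})$ strictly decreases to some $r\ge 0$. If $r>0$, then since $d_{n}\downarrow r$ and $\varphi$ is upper semicontinuous from the right, $r=\lim_{n}d_{n+1}\le\limsup_{n}\varphi(d_{n})\le\varphi(r)<r$, a contradiction; hence $d_{n}\to 0$.

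Next I would show $(x_{n})$ is Cauchy. Suppose not: then there are $\varepsilon>0$ and indices $n_{k}<m_{k}$ with $m_{k}$ chosen minimal so that $d(x_{n_{k}},x_{m_{k}})\ge\varepsilon$. Minimality gives $d(x_{n_{k}},x_{m_{k}-1})<\varepsilon$, so by the triangle inequality $\varepsilon\le d(x_{n_{k}},x_{m_{k}})\le d(x_{n_{k}},x_{m_{k}-1})+d_{m_{k}-1}<\varepsilon+d_{m_{k}-1}$; since $d_{m_{k}-1}\to 0$, the numbers $t_{k}:=d(x_{n_{k}},x_{m_{k}})$ converge to $\varepsilon$ and do so from above. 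Now
\[
t_{k}\le d_{n_{k}}+d(x_{n_{k}+1},x_{m_{k}+1})+d_{m_{k}}\le d_{n_{k}}+\varphi(t_{k})+d_{m_{k}},
\]
and letting $k\to\infty$, using $t_{k}\to\varepsilon$ from the right together with upper semicontinuity of $\varphi$ from the right, we obtain $\varepsilon\le\varphi(\varepsilon)<\varepsilon$, a contradiction. Hence $(x_{n})$ is Cauchy and, by completeness of $(X,d)$, $x_{n}\to u$ for some $u\in X$.

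Finally, the contractive inequality makes $T$ continuous: if $d(y_{n},y)\to 0$, then for those $n$ with $d(y_{n},y)>0$ one has $d(Ty_{n},Ty)\le\varphi(d(y_{n},y))<d(y_{n},y)$, while for the remaining $n$ one has $Ty_{n}=Ty$; in either case $d(Ty_{n},Ty)\to 0$. Hence $Tu=\lim_{n}Tx_{n}=\lim_{n}x_{n+1}=u$, so $u$ is a fixed point. For uniqueness, if $Tu=u$ and $Tv=v$ with $u\neq v$, then $d(u,v)=d(Tu,Tv)\le\varphi(d(u,v))<d(u,v)$, which is impossible. For an arbitrary starting point $x\in X$ the same iteration produces a fixed point, necessarily $u$, so $T^{n}x\to u$. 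The one delicate point — and the reason for choosing $m_{k}$ minimal in the second step — is that in both limiting arguments the argument of $\varphi$ must approach its limit \emph{from the right}, so that the one-sided semicontinuity hypothesis can legitimately be applied; everything else is routine.
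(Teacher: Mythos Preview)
The paper does not give a proof of this theorem at all: Theorem~\ref{Boyd} is quoted from Boyd and Wong \cite{Wong} purely as a black box, to be combined with Theorems~\ref{FP} and~\ref{FP2} in the subsequent propositions. There is therefore no ``paper's own proof'' to compare against.

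Your argument is essentially the original Boyd--Wong proof and is correct. One small point worth making explicit: when you extract the subsequences in the Cauchy step you should state that the $n_{k}$ can be chosen with $n_{k}\to\infty$ (e.g.\ $n_{k}\ge k$), since this is what guarantees $d_{n_{k}}\to 0$ and $d_{m_{k}}\to 0$ in the final displayed inequality. With that clarification the proof is complete, and your remark about needing $t_{k}\downarrow\varepsilon$ from the right so that the one-sided upper semicontinuity applies is exactly the point that makes the argument work.
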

\begin{proposition} \label{moregeneral}
Let $(X,p)$ be a complete TVS-CMS and let $T:X\rightarrow X$ be a mapping satisfying
$$p(Tx,Ty)\leq \varphi (p(x,y)),~~\texttt{for all} ~x,y \in X,$$
where $\varphi: P\rightarrow P$ is increasing,  $\varphi(re)\leq r \varphi(e)$ for every real number  $r\geq 0$, and $\xi_e(\varphi(e))< 1$. Then, $T$ has a unique fixed point, say $u$, and $T^nx\rightarrow u$ as $n\rightarrow \infty$ for all $x \in X$.
\end{proposition}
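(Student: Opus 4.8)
The plan is to reduce the statement to Theorem \ref{Boyd} by transferring the contractive condition from the TVS-cone metric $p$ to its correspondent usual metric $d_p=\xi_e\circ p$, exactly as in Theorem \ref{FP}. First I would invoke Theorem \ref{FP} directly: since $\varphi:P\to P$ is increasing and satisfies $\varphi(re)\le r\varphi(e)$ for all $r\ge 0$, Theorem \ref{FP} produces the linear, increasing, continuous map $\phi:\mathbb{R}^+\to\mathbb{R}^+$ given by $\phi(t)=\xi_e(\varphi(e))\,t$ with the property that $d_p(Tx,Ty)\le \phi(d_p(x,y))$ for all $x,y\in X$. The extra hypothesis here, $\xi_e(\varphi(e))<1$, says precisely that the slope of this linear $\phi$ is strictly less than $1$, so $\phi(t)<t$ for every $t>0$, and $\phi$, being continuous, is in particular upper semicontinuous from the right. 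Thus $\phi$ satisfies the hypotheses imposed on the comparison function in Theorem \ref{Boyd}.

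Next I would check completeness of the auxiliary space. By hypothesis $(X,p)$ is a complete TVS-CMS, so by Lemma \ref{lemma_eq_statements}(iii) the metric space $(X,d_p)$ is complete. Therefore Theorem \ref{Boyd} applies to $(X,d_p)$ and the map $T$: there is a unique $u\in X$ with $Tu=u$, and $d_p(T^nx,u)\to 0$ as $n\to\infty$ for every $x\in X$.

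Finally I would transport the conclusion back to the cone setting. Uniqueness of the fixed point is immediate: any fixed point of $T$ in $(X,p)$ is a fixed point of $T$ in $(X,d_p)$, hence equals $u$. For the convergence of iterates, $d_p(T^nx,u)\to 0$ means $T^nx\to u$ in $(X,d_p)$, and by Remark \ref{notice that} (the converse of Lemma \ref{lemma_eq_statements}(i)) this gives $T^nx\to u$ in the TVS-cone metric sense, i.e. $T^nx\stackrel{cone}{\to}u$. This completes the argument.

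The routine verifications (that $\phi(t)=\xi_e(\varphi(e))t<t$ for $t>0$, that $\varphi(e)\in P$ so $\xi_e(\varphi(e))\ge 0$, and the bookkeeping with $\varepsilon$ inside Theorem \ref{FP}) are already subsumed in the cited results. The only place that needs a moment's care — and the main (minor) obstacle — is making sure the comparison inequality $d_p(Tx,Ty)\le\phi(d_p(x,y))$ from Theorem \ref{FP} is exactly in the form Theorem \ref{Boyd} wants, namely with a \emph{single} comparison function that is right upper semicontinuous and strictly below the diagonal on $(0,\infty)$; linearity of $\phi$ with slope $<1$ handles both requirements at once, so no genuine difficulty arises. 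One should also note the degenerate case $\varphi(e)=0$, where $\phi\equiv 0$ and $T$ is constant on second coordinates after one step, which is trivially covered.
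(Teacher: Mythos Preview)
Your proposal is correct and follows exactly the route the paper takes: the paper's own proof is the single line ``The proof follows by Theorem \ref{Boyd} and Theorem \ref{FP},'' and you have simply unpacked that line with the necessary bookkeeping (completeness of $(X,d_p)$ via Lemma \ref{lemma_eq_statements}(iii), the linear $\phi$ with slope $\xi_e(\varphi(e))<1$, and the transfer of convergence back to the cone setting via Remark \ref{notice that}).
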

The proof follows by Theorem \ref{Boyd} and Theorem \ref{FP}.

\begin{theorem} \label{FP2}
Let $(X,p)$ be a TVS-CMS and $(X,d_p)$ its correspondent metric space, $d_p=\xi_e \circ p$. Assume $T:X \rightarrow X$ is a mapping such that $p(Tx,Ty) \leq \varphi (p(x,y))$ for all $x,y \in X$, where $\varphi: P\rightarrow P$ is increasing, continuous, $\varphi(0)=0$ and $\varphi(re)\ll re$ for every real number  $r > 0$. Then, there exists a continuous increasing mapping $\phi: \mathbb{R}^+\rightarrow \mathbb{R}^+$ such that $\phi(t)< t$ for $t>0$ and $d_p(Tx,Ty)\leq \phi (d_p(x,y))$, for all $x,y \in X$.
\end{theorem}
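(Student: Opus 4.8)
The plan is to mimic the construction used in Theorem \ref{FP}, but since $\varphi$ is no longer sub-homogeneous we cannot take $\phi$ linear; instead I would scalarize $\varphi$ along the ray through the interior point $e$ and set
\[
\phi(t)=\xi_e(\varphi(te)),\qquad t\ge 0 .
\]
Before checking the properties of $\phi$, I would record the two normalisations $\xi_e(0)=0$ and $\xi_e(e)=1$. Both follow from Lemma \ref{lemma_scalarization}: applying $(i)$ and $(iii)$ with $y=e,\ t=1$ gives $\xi_e(e)\le 1$ (since $0\in P$) and $\xi_e(e)\ge 1$ (since $0\notin int(P)$, for otherwise $P=E$); the value $\xi_e(0)=0$ comes the same way with $y=0$. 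Combining $\xi_e(e)=1$ with positive homogeneity (Lemma \ref{lemma_scalarization}$(v)$) yields the identity $\xi_e(te)=t$ for every $t\ge 0$, which will be the crucial exactness needed later.

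Next I would verify the four structural requirements on $\phi$. Non‑negativity and $\phi(0)=0$ are immediate from $\varphi(te)\in P$ and the monotonicity of $\xi_e$ (Lemma \ref{lemma_scalarization}$(vi)$), using $\varphi(0)=0$ for the second. For monotonicity: if $0\le t_1\le t_2$ then $t_2e-t_1e=(t_2-t_1)e\in P$, so $t_1e\le t_2e$, hence $\varphi(t_1e)\le\varphi(t_2e)$ because $\varphi$ is increasing, and then $\phi(t_1)\le\phi(t_2)$ by Lemma \ref{lemma_scalarization}$(vi)$ once more. Continuity of $\phi$ holds because it is the composition $t\mapsto te\mapsto\varphi(te)\mapsto\xi_e(\varphi(te))$ of continuous maps ($t\mapsto te$ into $P$, then $\varphi$ continuous on $P$ by hypothesis, then $\xi_e$ continuous by Lemma \ref{lemma_scalarization}$(v)$). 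Finally, the strict inequality $\phi(t)<t$ for $t>0$ is exactly where the hypothesis $\varphi(re)\ll re$ is used: for $t>0$ we have $te\in\varphi(te)+int(P)$, so Lemma \ref{lemma_scalarization}$(viii)$ gives $\phi(t)=\xi_e(\varphi(te))<\xi_e(te)=t$.

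It then remains to produce the contractive estimate $d_p(Tx,Ty)\le\phi(d_p(x,y))$. I would set $t:=d_p(x,y)=\xi_e(p(x,y))$. Applying Lemma \ref{lemma_scalarization}$(i)$ with the trivial inequality $\xi_e(p(x,y))\le t$ (this uses that $P$ is closed, so the infimum defining $\xi_e$ is attained) gives $p(x,y)\le te$. Since $\varphi$ is increasing, $\varphi(p(x,y))\le\varphi(te)$; chaining the assumed bound $p(Tx,Ty)\le\varphi(p(x,y))$ with monotonicity of $\xi_e$ (Lemma \ref{lemma_scalarization}$(vi)$) then yields
\[
d_p(Tx,Ty)=\xi_e(p(Tx,Ty))\le\xi_e(\varphi(p(x,y)))\le\xi_e(\varphi(te))=\phi(t)=\phi(d_p(x,y)),
\]
which completes the argument.

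The only genuinely delicate point I foresee is the strict bound $\phi(t)<t$: one must have $\xi_e(te)=t$ \emph{exactly}, not merely $\xi_e(te)\le t$, so that Lemma \ref{lemma_scalarization}$(viii)$ delivers a strict gap rather than a non‑strict one; this is precisely why $\xi_e(e)=1$ together with positive homogeneity is set up first. Everything else is a routine assembly of the monotonicity and continuity clauses of Lemma \ref{lemma_scalarization}. If one prefers not to invoke attainment of the infimum in the definition of $\xi_e$, the same estimate can be carried out with an $\varepsilon$‑cushion exactly as in the proof of Theorem \ref{FP}, choosing $r$ with $d_p(x,y)<r<d_p(x,y)+\varepsilon$ and $p(x,y)\le re$, and then letting $\varepsilon\to 0$ using the continuity of $\phi$.
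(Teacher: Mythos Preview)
Your proof is correct and defines the same comparison function $\phi(t)=\xi_e(\varphi(te))$ as the paper. The paper's proof, however, runs the contractive estimate through the $\varepsilon$-cushion that you mention only as an alternative: it picks $r$ with $d_p(x,y)<r<d_p(x,y)+\varepsilon$ and $p(x,y)\le re$, obtains $d_p(Tx,Ty)\le\phi(r)\le\phi(d_p(x,y))+\phi(\varepsilon)$, and then lets $\varepsilon\to 0$. Your direct route via Lemma~\ref{lemma_scalarization}$(i)$ (taking $t=d_p(x,y)$ itself and using $\xi_e(p(x,y))\le t\Rightarrow p(x,y)\le te$) is cleaner and avoids the subadditivity claim $\phi(r+s)\le\phi(r)+\phi(s)$ that the paper invokes; incidentally, your parenthetical about attainment of the infimum is unnecessary, since Lemma~\ref{lemma_scalarization}$(i)$ is already a biconditional. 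You also make explicit the verification of $\phi(t)<t$ via $\xi_e(te)=t$ and Lemma~\ref{lemma_scalarization}$(viii)$, which the paper leaves implicit.
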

\begin{proof}
Define $\phi(r)=\xi_e(\varphi(re))$ for every $r \geq 0$. Then clearly,  by  Lemma \ref{lemma_scalarization}, $\phi$ is  continuous, increasing, $\phi(0)=0$ and $\phi(r+s)\leq \phi(r)+ \phi(s)$ for all $r,s>0$. To prove our claim, let $\varepsilon > 0$ be given and find $r \in \mathbb{R}$ such that $0 \leq d_p(x,y)<r < d_p(x,y)+\varepsilon$, where $p(x,y) \in re-P$ (or $p(x,y)\leq re$). Now, by  Lemma \ref{lemma_scalarization} and that $\varphi$ is increasing, we have
$$d_p(Tx,Ty)=\xi_e(p(Tx,Ty))\leq$$
\begin{equation} \label{incr}
\xi_e(\varphi(p(x,y)))\leq \xi_e(\varphi(re))=\phi(r)\leq \phi(d_p(x,y))+ \phi(\varepsilon)
\end{equation}
Since $\varepsilon$ is arbitrary and $\phi(0)=0$ we conclude that $d_p(Tx,Ty)\leq \phi(d_p(x,y))$.
\end{proof}
\begin{proposition} \label{moregeneral}
Let $(X,p)$ be a complete TVS-CMS and let $T:X\rightarrow X$ be a mapping satisfying
$$p(Tx,Ty)\leq \varphi (p(x,y)),~~\texttt{for all} ~x,y \in X,$$
where $\varphi: P\rightarrow P$ is increasing uppersemicontinuous from right,  $\varphi(re)\ll r e$ for every real number  $r>0$. Then, $T$ has a unique fixed point, say $u$, and $T^nx\rightarrow u$ as $n\rightarrow \infty$ for all $x \in X$.
\end{proposition}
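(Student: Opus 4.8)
The plan is to imitate the proof of Theorem \ref{FP2}: work in the correspondent metric space $(X,d_p)$ with $d_p=\xi_e\circ p$, manufacture from $\varphi$ a scalar comparison function $\phi:\mathbb R^+\to\mathbb R^+$ to which the Boyd--Wong theorem (Theorem \ref{Boyd}) applies, and then transport the conclusion back to $(X,p)$. By Lemma \ref{lemma_eq_statements}(iii) the space $(X,d_p)$ is a complete metric space, and by Remark \ref{notice that} (equivalently Corollary \ref{isom}) the cone topology $\tau_p$ and the metric topology of $d_p$ coincide; hence it is enough to show that $T$, viewed as a self-map of $(X,d_p)$, has a unique fixed point $u$ with $T^n x\to u$ in $(X,d_p)$ for every $x$, since then the same holds in $(X,p)$.

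Define $\phi(r)=\xi_e(\varphi(re))$ for $r\ge0$. The routine properties are as follows. If $0\le r\le s$ then $re\le se$, so $\varphi(re)\le\varphi(se)$ because $\varphi$ is increasing, and therefore $\phi(r)\le\phi(s)$ by Lemma \ref{lemma_scalarization}(vi); thus $\phi$ is increasing. For $r>0$ the hypothesis $\varphi(re)\ll re$ means $re\in\varphi(re)+\mathrm{int}(P)$, so Lemma \ref{lemma_scalarization}(viii) gives $\phi(r)=\xi_e(\varphi(re))<\xi_e(re)=r\,\xi_e(e)=r$, using positive homogeneity (Lemma \ref{lemma_scalarization}(v)) and $\xi_e(e)=1$. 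Finally $\phi(0)=0$: since $\varphi$ is increasing, $0\le\varphi(0)\le\varphi(re)\le re$ for every $r>0$, so $re-\varphi(0)\in P$ for all $r>0$; letting $r\downarrow0$ and using that $P$ is closed yields $-\varphi(0)\in P$, hence $\varphi(0)\in P\cap(-P)=\{0\}$ and $\phi(0)=\xi_e(0)=0$.

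The step that requires care — and the one I expect to be the main obstacle — is upper semicontinuity from the right of $\phi$, which must be extracted from the corresponding hypothesis on $\varphi$. Fix $r\ge0$ and a sequence $r_n\downarrow r$. Since $\varphi$ is increasing, $\{\varphi(r_ne)\}$ is order-decreasing and bounded below by $\varphi(re)$, so by Lemma \ref{lemma_scalarization}(vi) the real sequence $\phi(r_n)=\xi_e(\varphi(r_ne))$ is non-increasing with limit $L\ge\phi(r)$. To force $L\le\phi(r)$ one uses that $\varphi$ is upper semicontinuous from the right, in the sense that for each neighbourhood $V$ of $0$ in $E$ there is $\delta>0$ with $\varphi(r'e)\le\varphi(re)+v$ for some $v\in V$ whenever $0<r'-r<\delta$; combined with the subadditivity, monotonicity and continuity of $\xi_e$ (Lemma \ref{lemma_scalarization}(v),(vi),(vii)), and choosing $V$ so small that $\xi_e(v)<\varepsilon$ on $V$, this gives $\phi(r_n)\le\phi(r)+\varepsilon$ for large $n$, hence $L\le\phi(r)$ as $\varepsilon$ is arbitrary. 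Pinning down precisely the right notion of ``upper semicontinuous from the right'' for a $P$-valued map, and checking that it transfers through $\xi_e$ in this way, is the delicate point; everything else is routine.

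It remains to establish the scalar contractive condition $d_p(Tx,Ty)\le\phi(d_p(x,y))$ for all $x,y\in X$. Applying Lemma \ref{lemma_scalarization}(i) with $t=\xi_e(p(x,y))=d_p(x,y)$ gives $p(x,y)\le d_p(x,y)\,e$; since $\varphi$ is increasing and $p(Tx,Ty)\le\varphi(p(x,y))$, we obtain $p(Tx,Ty)\le\varphi(d_p(x,y)\,e)$, and applying the monotone map $\xi_e$ (Lemma \ref{lemma_scalarization}(vi)) yields $d_p(Tx,Ty)=\xi_e(p(Tx,Ty))\le\xi_e(\varphi(d_p(x,y)\,e))=\phi(d_p(x,y))$. (Alternatively one may run verbatim the $\varepsilon$-argument of Theorem \ref{FP2}, now using that $\phi(\varepsilon)\to0$ as $\varepsilon\to0^+$, which follows from $\phi$ being increasing, $\phi(0)=0$, and upper semicontinuous from the right at $0$.) Now $(X,d_p)$ is a complete metric space, $\phi$ is upper semicontinuous from the right with $\phi(t)<t$ for $t>0$, and $d_p(Tx,Ty)\le\phi(d_p(x,y))$; Theorem \ref{Boyd} therefore yields a unique fixed point $u$ of $T$ with $T^n x\to u$ in $(X,d_p)$ for every $x\in X$, and by the topological equivalence noted above $T^n x\to u$ also in $(X,p)$, with $u$ the unique fixed point. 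This completes the plan.
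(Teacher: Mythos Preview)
Your proposal is correct and follows essentially the same route as the paper: the paper's entire proof is the one line ``The proof follows by Theorem \ref{Boyd} and Theorem \ref{FP2}'', i.e.\ set $\phi(r)=\xi_e(\varphi(re))$, check the Boyd--Wong hypotheses in $(X,d_p)$, and apply. You have in fact supplied more than the paper does --- a clean derivation of $\phi(0)=0$, a direct proof of $d_p(Tx,Ty)\le\phi(d_p(x,y))$ via $p(x,y)\le d_p(x,y)\,e$ (avoiding the $\varepsilon$-argument of Theorem \ref{FP2}), and an explicit discussion of how to pass right upper semicontinuity from $\varphi$ to $\phi$, a point the paper does not address since Theorem \ref{FP2} as stated assumes $\varphi$ continuous.
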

The proof follows by Theorem \ref{Boyd} and Theorem \ref{FP2}.

\begin{remark}
Theorem \ref{FP} above can also be used to generalize some quasi-contraction type fixed point theorems from metric spaces to TVS-cone metric spaces.
\end{remark}

\begin{remark}
Even it is possible to find  metric spaces which are topologically isomorphic to the original TVS-CMS, it is still interesting to ask or comment on the following:

(i) Is it possible to give rise to nonlinear contraction type conditions with $\varphi$ not neccesirly increasing,  by the nonlinear scalarization from TVS-CMS to usual metric spaces and thus provides trivial proofs to the TVS-CMS counterpart of the results such as in \cite{Wong} and without any regularity assumption. If it is not the case, can we adapt the proof in \cite{Wong} to regular complete TVS-CMS? That is can we remove the monotonicity of $\varphi$ from Theorem \ref{FP2}.

(ii) There are many properties which are not topological thus they can not pass from $(X,p)$ to $(X,d_p)$.

(iii) Proving a completion theorem for TVS-CMS is still interesting. Actually, even  $(X,p)$ and $(X,d_p)$ are topologically isomorphic we can't say they are isometric. See the discussion in the previous section.

(iv) Is it trivial to generalize all Meir-Keeler type fixed point theorems to TVS-CMS and just by applying the nonlinear scalarization function?

(v)For a closed and convex subset C of a  Banach space and  $T : C \rightarrow C$ satisfies the conditions $0\leq s+|a|-2ab< 2(a+b)$
$4a d(Tx,Ty)+b (d(x,Tx)+d(y,Ty))\leq)\leq s d(x,y)$, $d(x,y)=\|x-y\|$,  T has at least one fixed point (for example see \cite{Erdal}). The nonlinear scalarization $\xi_e$ can not be applied to generalize such   kind of theorems to TVS-CMS. However, in the proof of such cases Lemma \ref{th2} will be crucial.

(vi) We may conclude that the nonlinear scalarization does not work if extra normality or regularity assumptions on the cone $P$ are requested. For example, the following theorem which has been proved in \cite{Choud}:
\begin{theorem} \label{weak contraction}\cite{Choud}
Let $(X,d)$ be a complete cone metric space with regular cone $P$ such that $d(x,y) \in int P$ for each $x, y \in P$ with $x\neq y$. Let $T:X\rightarrow X$ be a mapping satisfying the inequality
\begin{equation} \label{w1}
d(Tx,Ty)\leq d(x,y)-\varphi(d(x,y)),~~\emph{for} ~~x,y \in X
\end{equation}
where $int P\cup \{0\}\rightarrow int P\cup \{0\}$ is a continuous and monotone function with

(i) $\varphi(t)=0$ if and only if $t=0$

(ii) $\varphi (t)\ll t$ for $t\gg 0$,

(ii) either $\varphi(t)\leq d(x,y)$ or $d(x,y)\leq \varphi(t)$ for $t \in int P\cup \{0\}$ and $x,y \in X$.

Then $T$ has a unique fixed point in $X$.

It is clear that the properties of the nonlinear scalarization are not enough to rise the condition (\ref{w1}) to usual metric space.
\end{theorem}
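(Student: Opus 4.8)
The plan is to run the classical Picard‑iteration argument for weak contractions, with the completeness of $\mathbb{R}$ replaced at two places by the regularity of $P$, and with the comparability hypothesis on $\varphi$ playing the role that the total order of $\mathbb{R}$ plays in the real‑variable version.

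First I would fix $x_0\in X$, set $x_{n+1}=Tx_n$, and write $d_n=d(x_n,x_{n+1})$. If $x_n=x_{n+1}$ for some $n$ there is nothing to prove, so assume $x_n\neq x_{n+1}$ for all $n$; then $d_n\in \text{int}P$, so $\varphi(d_n)$ makes sense. Applying \eqref{w1} to $(x_{n-1},x_n)$ gives $d_n\leq d_{n-1}-\varphi(d_{n-1})$, and since $\varphi(d_{n-1})\in P$ this yields $d_n\leq d_{n-1}$. Hence $\{d_n\}$ is decreasing and bounded below by $0$, so by regularity of $P$ it converges in $(E,S)$ to some $d^\ast\in P$ with $d^\ast\leq d_n$ for all $n$. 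Passing to the limit in $d_n\leq d_{n-1}-\varphi(d_{n-1})$, using the continuity of $\varphi$ and the closedness of $P$, gives $-\varphi(d^\ast)\in P$, so $\varphi(d^\ast)\in P\cap(-P)=\{0\}$, whence $d^\ast=0$ by property (i). One technical point to dispatch here is that $d^\ast$ lies in the domain $\text{int}P\cup\{0\}$ of $\varphi$; this follows from the whole chain $\{d_n\}$ sitting in $\text{int}P$ together with monotonicity of $\varphi$, or by extending $\varphi$ continuously to the closure of $\text{int}P$.

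The core of the proof is to show $\{x_n\}$ is Cauchy, and this is the step I expect to be the main obstacle. I would argue by contradiction: if not, there is a $c\gg 0$ such that for every $N$ there are indices $m>n\geq N$ with $d(x_m,x_n)\not\ll c$; choosing $m=m(k)$ minimal for each $n=n(k)\geq k$ with this property, the cone triangle inequality together with $d_n\to 0$ pins down the behaviour of $d(x_{m(k)},x_{n(k)})$ and of $d(x_{m(k)-1},x_{n(k)-1})$, and regularity of $P$ lets one extract a common nonzero limit $\ell\in P$ of these (monotone‑up‑to‑$o(1)$) quantities. Applying \eqref{w1} to $(x_{m(k)-1},x_{n(k)-1})$, passing to the limit with the continuity of $\varphi$, and using the comparability hypothesis to keep all the terms in a totally ordered chain, yields $\ell\leq \ell-\varphi(\ell)$, hence $\varphi(\ell)=0$, contradicting $\ell\neq 0$ via (i). The difficulty is precisely that in a cone metric space the distances occurring in this argument need not be pairwise comparable, which is exactly why the otherwise unmotivated hypothesis "$\varphi(t)\leq d(x,y)$ or $d(x,y)\leq \varphi(t)$" is imposed; and, as emphasized in the remark, one cannot bypass this by pushing everything through $\xi_e$, since $\xi_e$ is only subadditive, giving $\xi_e(a-b)\geq \xi_e(a)-\xi_e(b)$ — the wrong direction to turn the subtractive contraction \eqref{w1} into a usable scalar inequality.

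Once $\{x_n\}$ is Cauchy, completeness of $(X,d)$ gives $x_n\to u$ for some $u\in X$. Then for any $c\gg 0$ one has, for large $n$, $d(Tu,u)\leq d(Tu,Tx_n)+d(x_{n+1},u)\leq d(u,x_n)-\varphi(d(u,x_n))+d(x_{n+1},u)\leq d(u,x_n)+d(x_{n+1},u)\ll c$, and since $c\gg 0$ was arbitrary this forces $d(Tu,u)=0$, i.e. $Tu=u$. For uniqueness, if $Tu=u$ and $Tv=v$ with $u\neq v$, then $d(u,v)=d(Tu,Tv)\leq d(u,v)-\varphi(d(u,v))$, so $\varphi(d(u,v))\in P\cap(-P)=\{0\}$, hence $d(u,v)=0$ by (i), a contradiction. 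This completes the proof.
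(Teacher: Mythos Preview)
The paper does not contain a proof of this theorem. The statement is quoted from \cite{Choud} inside the final Remark solely as an example of a result whose contractive condition \eqref{w1} cannot be pushed through the nonlinear scalarization $\xi_e$; the authors explicitly say ``the following theorem which has been proved in \cite{Choud}'' and give no argument of their own. So there is no in-paper proof to compare your proposal against.

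That said, your outline is essentially the argument of Choudhury--Metiya: monotonicity of $\{d_n\}$ via \eqref{w1}, regularity to get a limit, continuity of $\varphi$ to force the limit to $0$, a contradiction argument for the Cauchy property, then the routine fixed-point and uniqueness steps. Two places in your sketch would need tightening before it stands on its own. First, the domain issue you flag for $d^\ast$ is real: a decreasing limit of points of $\mathrm{int}\,P$ need not lie in $\mathrm{int}\,P\cup\{0\}$, and ``extend $\varphi$ continuously to the closure'' is not granted by the hypotheses. Second, in the Cauchy step you invoke regularity to extract a limit $\ell$ of $d(x_{m(k)},x_{n(k)})$, but these quantities are not a monotone sequence in $P$, so regularity does not apply directly; in \cite{Choud} this is handled differently, and the comparability hypothesis (your ``otherwise unmotivated'' condition) is used precisely at this point to manufacture the needed order relations. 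Your identification of why $\xi_e$ fails here---subadditivity gives the inequality in the wrong direction for a subtractive contraction---matches exactly the point the paper is making.
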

\end{remark}

%End of refernces


\begin{thebibliography}{9}



\bibitem{AR_2009} Abbas, M. , Rhoades, B.E.,
\emph{Fixed and periodic point results in cone metric spaces},
appl. Math. Lett.
22(4) (2009), 511-515.

%1
\bibitem{T} Abdeljawad, T.: \emph{Completion of cone metric spaces},
Hacettepe Journal of Mathematics and Statistics, 39 (1), 67-74 (2010).

\bibitem{AK} Abdeljawad, T. and Karap{\i}nar, E.,
\emph{Quasicone Metric Spaces and Generalizations of Caristi Kirk's Theorem},
Fixed Point Theory Appl. (2009), doi:10.1155/2009/574387.
%3


\bibitem{CHY} Chen,G.Y., Huang,X.X., Yang, X.Q.:
Vector Optimization, Springer-Verlag, Berlin, Heidelberg, Germany, 2005.

\bibitem{De} Deimling, K.,  \emph{Nonlinear Functional Analysis}. Springer-Verlage ,
1985.

\bibitem{D_2008} Du Wei-Shih:
\emph{On some nonlinear problems induced by an abstract maximal element
principle}, J. Math. Anal. Appl.,347 (2008), 391–-399 .


\bibitem{D_2009} Du Wei-Shih,
\emph{A note on cone metric fixed point theory  and its equivalence},
{Nonlinear Analysis, 72 (5)(2010),2259-2261.

\bibitem{HZ} Huang Long-Guang, Zhang Xian:
\emph{Cone metric spaces and fixed point theorems of contractive mappings},
J. Math. Anal. Appl., 332 (2007), 1468--1476 .


\bibitem{K} Karap{\i}nar, E.,
\emph{Fixed Point Theorems in Cone Banach Spaces}
Fixed Point Theory Appl.}, vol 2009, 9 pages, doi:10.1155/2009/609281.

\bibitem{RH} Rezapour, Sh., Hamlbarani, R.,
\emph{Some notes on the paper "Cone metric spaces and fixed point theorems of
contractive mappings"},
J. Math.Anal. Appl.,
347 (2008),719--724 .



%14
 \bibitem{Ishak}Altun, I., Damjanovic, B., Djoric, D.,
\emph{Fixed point and common fixed point theorems on ordered cone metric spaces},
Appl. Math. Lett.,  23 (3) (2010),  310-316.


%15
\bibitem{TAA} Turkoglu, D., Abuloha, M., Abdeljawad, T.,
\emph{KKM mappings in cone metric spaces and some fixed point theorems}
Nonlinear Analysis: Theory, Methods \& Applications,  72 (1) (2010), 348-353.


\bibitem{TAA2}Abdeljawad,T.,  Turkoglu D.,  Abuloha, M.  \emph{Some
theorems and examples of cone Banach spaces}, Journal of Computational
Analysis and Applications, 12 (4) (2010), 739-753.

\bibitem{Ali}Aliprantis, C. D., Tourky, R.  , \emph{Cones and Duality}, American Mathematical Society, 2007.

\bibitem{Ayse} A. S\"{o}mez, \emph{On paracompactness in cone metric spaces}, Appl. Math. Lett., doi: 10.1016/j.aml.2009.12.011.

\bibitem{Wong}Boyd, D. W., Wong, J. S., \emph{On nonlinear contractions}, Proc. Amer. Math. Soc. 20 (1968), 458-464 .
\bibitem{Erdal} Karapinar E., \emph{Fixed Point Theorems in Cone Banach Spaces}, Fixed Point Theory and Applications
Volume 2009, Article ID 609281, 9 pages
doi:10.1155/2009/609281.

\bibitem{TA} Turkoglu, D., Abuloha, M.,
\emph{Cone Metric Spaces and Fixed Point Theorems in Diametrically Contractive
Mappings},Acta Math. Sinica, English Series . 26 (3) (2010), 489-496 .


\bibitem{Helge}Gl\"{o}ckner. H. , \emph{Positive Definite Functions on Infinite-Dimensional Convex Cones}, Memoirs of the AMS, no 789, 2003.

\bibitem{Choud}Choudhury, B. S., Metiya, N., \emph{Fixed points of weak contractions in cone metric spaces}, Nonlinear Analysis, Theory Methods and Applications, 72 (2010), 1589-1593 .

\end{thebibliography}
\end{document}